\def\newaliasedtheorem#1[#2]#3{
	\newaliascnt{#1@alt}{#2}
	\newtheorem{#1}[#1@alt]{#3}
	\expandafter\newcommand\csname #1@altname\endcsname{#3}
}
\numberwithin{equation}{section}
\newtheoremstyle{slanted}{\topsep}{\topsep}{\slshape}{}{\bfseries}{.}{.5em}{}
\theoremstyle{plain}
\newtheorem{theorem}{Theorem}[section]
\theoremstyle{definition}
\theoremstyle{remark}
\newcommand{\setN}{\mathbb{N}}
\newcommand{\setR}{\mathbb{R}}
\newcommand{\eps}{\varepsilon}
\let\altphi\phi
\let\phi\varphi
\let\varphi\altphi
\let\altphi\undefined
\newcommand{\abs}[1]{\left\lvert#1\right\rvert}
\newcommand{\norm}[1]{\left\lVert#1\right\rVert}
\let\div\undefined
\DeclareMathOperator{\div}{div}
\newcommand{\di}{\mathop{}\!\mathrm{d}}
\DeclareMathOperator{\supp}{supp}
\newcommand{\leb}{\mathscr{L}}
\newcommand{\dist}{\mathsf{d}}
\newfont{\tmpf}{cmsy10 scaled 2500}
\newcommand{\T}{{\mathbb{T}}}
\def\Xint#1{\mathchoice
	{\XXint\displaystyle\textstyle{#1}}%
	{\XXint\textstyle\scriptstyle{#1}}%
	{\XXint\scriptstyle\scriptscriptstyle{#1}}%
	{\XXint\scriptscriptstyle\scriptscriptstyle{#1}}%
	\!\int}
\def\XXint#1#2#3{{\setbox0=\hbox{$#1{#2#3}{\int}$ }
		\vcenter{\hbox{$#2#3$ }}\kern-.6\wd0}}
\def\dashint{\Xint-}
\begin{document}
	
	\title{Sobolev estimates for solutions of the transport equation and ODE flows associated to non-Lipschitz drifts}

\author{Elia Bru\'e  \thanks{Scuola Normale Superiore, \url{elia.brue@sns.it, }}~~~~~ 
	Quoc-Hung Nguyen \thanks{New York University Abu Dhabi, \url{qn2@nyu.edu, }} 
	 }
\maketitle

\begin{abstract}
	It is known, after \cite{Jabin16} and \cite{AlbertiCrippaMazzuccato18}, that ODE flows and solutions of the transport equation associated to Sobolev vector fields do not propagate Sobolev regularity, even of fractional order.
	In this paper, we show that some propagation of Sobolev regularity happens as soon as the gradient of the drift is exponentially integrable.
	We provide sharp Sobolev estimates and new examples. As an application of our main theorem, we generalize a regularity result for the 2D Euler equation obtained by Bahouri and Chemin in \cite{BahouriChemin94}.
	
%
	
	\smallskip
	
	\textit{Key words}: Ordinary differential equations with non smooth vector fields; transport equation; 2D Euler equation; Log-Lipschitz regularity.
	\medskip\\
	\textit{MSC} (2010): 34A12, 35F25, 35F10
\end{abstract}
\tableofcontents
\section*{Introduction}
We consider the Cauchy problem for the transport equation associated to a vector field $b:[0,T]\times \T^d\to \setR^d$ on the flat torus $\T^d:=\setR^d/\mathbb{Z}^d$
\begin{equation}\label{CE}
\begin{dcases}
\partial_t u+b\cdot \nabla_x u=0,\\
u(0,x)=u_0(x),
\end{dcases}\tag{Tr}
\qquad 
\end{equation}
where $u_0:\T^d\to \setR$ is a given initial data and $u:[0,T]\times \T^d\to \setR$ is the unknown to the problem.

The theory of characteristics establishes a link between solutions of \eqref{CE} and the flow $X:[0,T]\times \T^d\to \T^d$ of $b$, i.e. the solution of
\begin{equation}\label{ODE}
\begin{dcases}
\frac{\di}{\di t} X(t,x)=b(t,X(t,x))\qquad x\in \T^d, t\in [0,T],\\
X(0,x)=x.
\end{dcases}\tag{ODE}
\end{equation}
Thanks to the classical Cauchy-Lipschitz theory both problems are well-posed when the drift $b$ is regular enough, i.e. Lipschitz in the spatial variable uniformly in time. 
Unfortunately the Lipschitz regularity is a too strong assumption for applications. Indeed in various physical models of the mechanics of fluids it is essential to deal with non regular velocity, and this is not just a technical fact but corresponds to effective physical situations. 
For this reason in the last thirty years a big interest has grown on the study of \eqref{ODE} and \eqref{CE} under weaker assumptions on the vector field.



\paragraph*{}
In the present paper we study sharp regularity properties, in the scale of Sobolev spaces, of solutions of \eqref{CE} and \eqref{ODE} in a setting that is in between the classical setting of the Cauchy-Lipschitz theory and the Sobolev setting considered in the DiPerna-Lions-Ambrosio theory \cite{DiPernalions,Ambrosio04}. More precisely, we assume that $b$ admits a spatial distributional derivative satisfying
\begin{equation}\label{Hpp}
\sup_{t\in [0,T]}	\int_{\T^d}\exp\set{\beta |\nabla_x b(t,x)|} \di x<\infty
\quad\text{for some $\beta>0$ and}\ 
\div_x b\in L^{\infty}([0,T]\times \T^d).\tag{HP}
\end{equation}
We have chosen the ambient space $\T^d$ instead of $\setR^d$ just because compactness allows to avoid integrability problems at infinity and to obtain global estimates. This makes statements shorter and more elegant. It is worth stressing, however, that any result we are going to present holds true also in the Euclidean space $\setR^d$ provided one suitably localizes the estimates.  

The study of \eqref{CE} and \eqref{ODE} under \eqref{Hpp} is meaningful for applications to nonlinear partial differential equations. The 2D Euler equation in vorticity form (see \cite{BertozziMajda02,Lions96} for an overview) provides an important example of PDE where a vector field satisfying \eqref{Hpp} is involved. In particular, as an application of the main result in this work ( \autoref{th: MMain} and \autoref{th: Main 1}) we obtain a propagation of regularity result (\autoref{th:main2}) for solutions of the Euler equation with bounded initial vorticity enjoying a fractional order regularity. This theorem is a non trivial improvement of \cite[Corollary 1.1]{BahouriChemin94} stated in the periodic setting. 
See \autoref{section:Euler} for details on this.





\paragraph*{}
Let us now present the main regularity result of this manuscript underlying, by mean of examples, its sharpness in the Sobolev scale. We refer to \autoref{section:Regularity} for more details on our main theorems \autoref{th: MMain}, \autoref{prop:Regularity of flow} and related corollaries, while the examples \autoref{th: example1}, \autoref{th: example 2} and \autoref{th: example3} are presented in \autoref{section:Examples}.

First of all it is worth mentioning that, under the assumptions \eqref{Hpp}, it is well-known that \eqref{ODE} admits a unique flow in the classical sense.
Indeed the velocity field satisfies the \textit{log-Lipschitz} property that, identifying the drift with a periodic function from $\setR^d$ to $\setR^d$, reads
\begin{equation}\label{eq: log-lipschitz estimate1}
|b(t,x)-b(t,y)|\le C|x-y|\max\left\{|\log\left(|x-y|\right)|,1\right\}
\qquad
\forall x,y\in \setR^d\ t\in [0,T].
\end{equation}
This property implies in turn the existence and uniqueness of the curve $t\mapsto X_t(x)$  satisfying \eqref{ODE} (look at \autoref{lemma: log-lipschitz regularity} and the discussion in \autoref{subsection: regularity of flows}). 
Moreover, $X_t:\T^d\to \T^d$ is invertible for any fixed time $t\in [0,T]$ and
\begin{equation}\label{lagrangianity}
	u(t,x):=u_0((X_t)^{-1}(x))
	\quad t\in [0,T],\quad \text{with}\  u_0\in L^p(\T^d),
\end{equation}
provides the unique distributional solution of the Cauchy problem \eqref{CE} in $L^{\infty}([0,T]; L^p(\T^d))$, for $p\in [1,\infty]$ (see \autoref{remark:existence and uniqueness} for more explanations). For distributional solutions we mean weakly continuous and bounded curves $t\mapsto u_t\in L^p(\T^d)$ satisfying
\begin{equation*}
\int_{\T^d} u_t(x)\phi(x)\di x - \int_{\T^d} u_0(x)\phi(x)\di x=\int_0^t\int_{\T^d} u_s(x)(b_s(x)\cdot \nabla \phi(x)-\div b_s(x)\phi(x))\di x\di s,
\end{equation*}
for any $t\in [0,T]$ and $\phi\in C^{\infty}(\T^d)$.

In order to make this introduction as clear as possible we do not illustrate here our main result \autoref{th: MMain} for \eqref{CE}, since it needs the introduction of a suitable functional class, we refer to \autoref{section:Regularity} for this. We prefer instead focusing the attention on the \textit{Lagrangian} side of the problem (i.e. the study of \eqref{ODE}) that is really the core of our analysis. Indeed any regularity estimate for the flow $X_t$ gives in turn results for the transport equation \eqref{CE} as a consequence of the \textit{Lagrangian identity} \eqref{lagrangianity}. In what follows $\dist$ denotes the intrinsic distance on the flat torus $\T^d$.
\begin{theorem}\label{main in introduction}
	Let $b:[0,T]\times\T^d\to\setR^d$ satisfy
	\begin{equation*}
	\sup_{t\in [0,T]}	\int_{\T^d}\exp\set{\beta |\nabla b(t,x)|} \di x=K<\infty
	\quad\text{for some $\beta>0$ and}\ 
	\norm{\div b}_{L^{\infty}([0,T]\times \T^d)}=L<\infty.
	\end{equation*}
    Then, for any $x,y\in \T^d$ and $t\in [0,T]$, we have
    \begin{equation}\label{i1}
    \dist(X_t(x),X_t(y))\le g_t(x)\dist(x,y)
    \quad
    \text{and}
    \quad
    \dist((X_t)^{-1}(x),(X_t)^{-1}(y))\le g_t(x)\dist(x,y),
    \end{equation}
	for some nonnegative function $g_t$ that fulfills 
	\begin{equation}\label{eq: key1}
	\norm{g_t}_{L^{q_t}}\le e^{\frac{t^2LC_1}{\beta}}( C_2K)^{\frac{tC_1}{\beta}}
	\qquad \forall t\in [0,T],
	\quad \text{with}\quad q_t:=\frac{\beta}{C_1t},
	\end{equation}
	where $C_1>0$ and $C_2>0$ depend only on $d$.
\end{theorem}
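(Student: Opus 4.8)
The plan is to control the logarithm of the difference quotient $\dist(X_t(x),X_t(y))/\dist(x,y)$ along the flow, using the log-Lipschitz bound \eqref{eq: log-lipschitz estimate1} together with a maximal-function gain that leverages the exponential integrability hypothesis. Fix $x,y$ and set $R(t):=\dist(X_t(x),X_t(y))$. From \eqref{ODE} and the fundamental theorem of calculus one gets the differential inequality
\begin{equation*}
\abs{\didi t \log R(t)}\le \frac{\abs{b(t,X_t(x))-b(t,X_t(y))}}{R(t)}.
\end{equation*}
The standard approach (à la Crippa--De Lellis) is to bound the right-hand side by $C\bigl(M\abs{\nabla b}(t,X_t(x))+M\abs{\nabla b}(t,X_t(y))\bigr)$, where $M$ denotes the (local, torus) maximal operator; this is where the precise regularity of $b$ enters. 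First I would therefore record the pointwise inequality
\begin{equation*}
\frac{\abs{b(t,a)-b(t,a')}}{\dist(a,a')}\le C_d\bigl(M\abs{\nabla b}(t,a)+M\abs{\nabla b}(t,a')\bigr)
\end{equation*}
for a.e.\ $a,a'$, valid for any $W^{1,1}$ function, and integrate the differential inequality in time to obtain
\begin{equation*}
\log\frac{R(t)}{R(0)}\le C_d\int_0^t\bigl(M\abs{\nabla b}(s,X_s(x))+M\abs{\nabla b}(s,X_s(y))\bigr)\di s.
\end{equation*}
Defining $g_t(x):=\exp\bigl(2C_d\int_0^t M\abs{\nabla b}(s,X_s(x))\di s\bigr)$ (after symmetrizing in $x\leftrightarrow y$ and using $R(0)=\dist(x,y)$) gives the first inequality in \eqref{i1}; the inverse-flow estimate follows by running the flow backward, the backward drift $-b(T-\cdot,\cdot)$ satisfying the same hypotheses, noting that $\div b$ bounded makes the flow a near-measure-preserving homeomorphism.

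The heart of the matter is the $L^{q_t}$ bound \eqref{eq: key1}. By the incompressibility defect (Jacobian bounds $e^{-sL}\le \det\nabla X_s\le e^{sL}$ coming from $\norm{\div b}_\infty=L$), pushing forward by $X_s^{-1}$ converts space integrals along the trajectory into fixed space integrals with a loss $e^{sL}$. Then by Jensen/Minkowski applied to the time integral defining $g_t$,
\begin{equation*}
\norm{g_t}_{L^{q_t}}\le \exp\!\Bigl(\tfrac{2C_d}{t}\int_0^t \norm{\,\exp\!\bigl(2C_d t\, M\abs{\nabla b}(s,\cdot)\bigr)\,}_{L^{q_t}}^{?}\Bigr)\cdots
\end{equation*}
— more cleanly, one chooses $q_t$ exactly so that $q_t\cdot 2C_d t=\beta$ up to the dimensional constant, i.e.\ $q_t=\beta/(C_1 t)$, so that $\exp(2C_d t\, M\abs{\nabla b})$ raised to the power $q_t$ becomes $\exp(\beta\, M\abs{\nabla b})$ (absorbing constants into $C_1$). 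The exponential integrability hypothesis, combined with the fact that the maximal operator is bounded on the Orlicz space $\exp L$ on the torus (or more simply $\norm{\exp(\beta' M f)}_{L^1}\le C\norm{\exp(\beta f)}_{L^1}$ for suitable $\beta'<\beta$, a standard consequence of the weak-$(1,1)$ and $L^\infty$ bounds), then yields $\sup_s\norm{\exp(\beta\, M\abs{\nabla b}(s,\cdot))}_{L^1}\le C_2 K$. Feeding the Jacobian loss $e^{sL}$ through the time-average produces the factor $e^{t^2 L C_1/\beta}$, and the $L^1$-Orlicz bound on each time slice, after the time integral is handled by Jensen, produces $(C_2K)^{tC_1/\beta}$; collecting exponents gives \eqref{eq: key1}.

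The main obstacle I anticipate is making the interchange of the time integral with the $L^{q_t}$ norm rigorous and quantitatively sharp: one has $g_t=\exp\bigl(\fint_0^t\! t\cdot(\text{stuff})\bigr)$-type structure, and a naive Minkowski inequality in $L^{q_t}$ does not commute with the exponential. The correct device is Jensen's inequality in the \emph{time} variable — writing $g_t(x)^{q_t}=\exp\bigl(\beta\,\fint_0^t [\,\text{const}\cdot M\abs{\nabla b}(s,X_s(x))+\text{const}\cdot sL\,]\di s\bigr)\le \fint_0^t \exp\bigl(\beta\,\text{const}\cdot M\abs{\nabla b}(s,X_s(x))+\beta\,\text{const}\cdot sL\bigr)\di s$ — after which one integrates in $x$, changes variables $x\mapsto X_s^{-1}(x)$ picking up $e^{sL}$, and applies the time-uniform Orlicz maximal bound slice by slice. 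The delicate points are (i) verifying that $M\abs{\nabla b}$ is finite a.e.\ and measurable jointly in $(s,x)$ so the trajectory composition makes sense (handled by a mollification/approximation argument, passing to the limit using stability of the log-Lipschitz flow), and (ii) tracking constants so that the single power $q_t=\beta/(C_1t)$ with one dimensional constant $C_1$ suffices for both the maximal-function gain and the exponent-matching — this forces $C_1$ to absorb the maximal operator's $\exp L$-boundedness constant, which is where a careful statement of that lemma on $\T^d$ is needed.
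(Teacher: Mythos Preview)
Your overall architecture---Gr\"onwall on $\log R(t)$, define $g_t$ as the exponential of a time integral, then Jensen in the time variable, change variables along the flow, and invoke a maximal-function bound---matches the paper exactly. There are, however, two points where your route diverges from the paper's, and one of them is a genuine gap for the statement as written.

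\medskip

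\textbf{The asymmetry issue.} You invoke the standard Crippa--De Lellis bound
\[
\frac{|b_t(a)-b_t(a')|}{\dist(a,a')}\le C_d\bigl(M|\nabla b_t|(a)+M|\nabla b_t|(a')\bigr),
\]
which is symmetric in $a,a'$. After Gr\"onwall this produces
\[
\dist(X_t(x),X_t(y))\le \dist(x,y)\,h_t(x)\,h_t(y),\qquad h_t(z):=\exp\Bigl(C_d\!\int_0^t M|\nabla b_s|(X_s(z))\,\di s\Bigr),
\]
and no amount of ``symmetrizing in $x\leftrightarrow y$'' turns $h_t(x)h_t(y)$ into a function of $x$ alone. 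The theorem, as stated, demands $g_t(x)$ depending only on $x$. The paper obtains this one-sided form via a different pointwise lemma (their Lemma~A.2): for $\beta'\le\beta$,
\[
\frac{\beta'}{C_1}\,\frac{|f(x)-f(y)|}{\dist(x,y)}\le 1+\log\bigl(c_d\,M(e^{\beta'|\nabla f|})(x)\bigr),
\]
with the right-hand side depending on $x$ \emph{only}. The trick is to apply Morrey's inequality for each integer exponent $p>d$, sum over $p$, and recognize the exponential series; because Morrey (for $p>d$) controls a pointwise oscillation by a single ball centred at $x$, the $y$-dependence disappears. This asymmetric inequality is precisely what your proposal is missing.

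\medskip

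\textbf{The maximal-function step.} Your plan requires the estimate $\int_{\T^d}\exp(\beta' M|\nabla b_t|)\lesssim \int_{\T^d}\exp(\beta|\nabla b_t|)$ for some $\beta'<\beta$, i.e.\ boundedness of $M$ on the Orlicz class $\exp L$ with loss of constant. This is true (sum $\|Mf\|_{L^p}^p\le C_d^p\|f\|_{L^p}^p$ over $p\ge2$), but it is not ``a standard consequence of the weak-$(1,1)$ and $L^\infty$ bounds'' in the way you suggest, and you would need to supply it. The paper avoids this entirely: because the maximal function in their lemma already sits \emph{inside} the logarithm acting on $e^{\beta'|\nabla b|}$, after raising $g_t$ to the power $q_t$ and applying Jensen in time they land on $\int_{\T^d}[M(e^{\beta|\nabla b_s|/2})]^2\,\di x$ and need only the $L^2$-boundedness of $M$---completely elementary.

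\medskip

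In short, replacing $M|\nabla b|$ by $\log M(e^{\beta'|\nabla b|})$ is the paper's key device; it simultaneously (i) produces the one-sided inequality the theorem requires and (ii) reduces the maximal-function input to the $L^2$ bound. Your scheme, as written, proves only the weaker symmetric estimate $\dist(X_t(x),X_t(y))\le h_t(x)h_t(y)\,\dist(x,y)$ (which, to be fair, suffices for the Sobolev-regularity corollaries via the Hajlasz characterization), and leans on a less standard maximal lemma.
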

\autoref{main in introduction} has to be understood as a quantitative approximation result in the spirit of Lusin's theorem for Sobolev functions (see \cite{Liu77}). Indeed, \eqref{i1} implies that, for any $\lambda>0$, the flow map $X_t$ and its inverse are $\lambda$-Lipschitz if restricted to the set $\set{g_t<\lambda}$. Moreover, since $g_t\in L^{q_t}(\T^d)$, by means of the Chebyschev inequality we can estimate the Lebesgue measure of the ``bad'' set
\begin{equation*}
	\leb^d(\set{g_t\ge\lambda})\le \frac{\norm{g_t}_{L^{q_t}}^{q_t}}{\lambda^{q_t}}
	\le \frac{C_2Ke^{tL}}{\lambda^{q_t}},
\end{equation*}
where we do not control the oscillation of $X_t$.


It is well-known since the work \cite{Hajlasz} that quantitative approximation properties à la Lusin are related (and actually characterize) Sobolev spaces for suitable choices of the exponents. This allows to deduce from \autoref{main in introduction} that
\begin{equation}\label{i2}
	X_t \in W^{1,q_t}(\T^d;\setR^d)
	\qquad\text{for any}\ 0\le t<\frac{\beta}{C_1},
	\quad \text{where}\quad q_t:=\frac{\beta}{C_1t},
\end{equation}
together with the quantitative bound
\begin{equation}\label{i7}
	\norm{\nabla X_t}_{L^{q_t}}\le C_d\norm{g}_{L^{q_t}}\le C_d e^{\frac{t^2LC_1}{\beta}}( C_2K)^{\frac{tC_1}{\beta}}.
\end{equation}
In other words $X_t$ enjoys a definite Sobolev regularity until a critical time that depends only on $\beta$. 
The very same conclusion holds also for $(X_t)^{-1}$ (note that \autoref{main in introduction} gives a symmetric result in $X_t$ and $(X_t)^{-1}$) but, for sake of simplicity, here and in the rest of the introduction we consider just the flow map $X_t$. 

What at the first instance could sound surprising is that \eqref{i2} is \textit{sharp}: it can really happen that the flow associated to a vector field satisfying \eqref{Hpp} ceases to be $W^{1,1}$ regular after a time of order $\sim \beta$.
In \autoref{th: example 2} we build a vector field with such a property. However, instead of explain this example, that is presented in detail in \autoref{section:Examples}, we want to present a formal computation to convey the idea that, if we are in a situation in which the Sobolev regularity of $X_t$ is neither instantaneously lost (as in the DiPerna-Lions setting \cite{AlbertiCrippaMazzuccato18}) nor fully preserved (as in the Cauchy-Lipschitz case), then it reasonably decreases according to \eqref{i2} and \eqref{i7} for structural reasons.

Let us consider a drift $b$ that does not depend on time, so its flow satisfies the semigroup property $X_{t+h}=X_t\circ X_h$. 
If $X_{\delta}\in W^{1,p}$ for some small time $\delta$ and some exponent $1<p<\infty$ then the H\"older inequality suggests that, reasonably, $\nabla X_{n\delta}\in L^{p/n}$ for any integer $n\le p$. Indeed we can use the semigroup property and the chain rule to write
\begin{equation*}
	|\nabla X_{n\delta}|(x)\le |\nabla X_{\delta}|(X_{\delta(n-1)})\cdot |\nabla X_{\delta}|(X_{\delta(n-2)})\cdot...\cdot |\nabla X_{\delta}|(X_{\delta})\cdot|\nabla X_{\delta}|(x),
\end{equation*}
and observe that the right hand side is a product of $n$ functions belonging to $L^p$. More precisely we have 
\begin{equation*}
     \norm{\nabla X_{\delta}(X_{k\delta})}_{L^p}\le e^{k\delta L}\norm{\nabla X_{\delta}}_{L^p}
     \qquad \text{for}\ k=0,...,n-1,
\end{equation*}
where $L:=\norm{\div b}_{L^{\infty}}$. This immediately leads to $\norm{\nabla X_{n\delta}}_{L^{p/n}}\le e^{L\delta n^2}\norm{\nabla X_{\delta}}_{L^p}^n$.
Eventually we set $t:=n\delta$ and rewrite
\begin{equation}\label{i8}
	\norm{\nabla X_{t}}_{L^{\frac{p\delta^{-1}}{t}}}\le e^{L\delta^{-1}t^2}\norm{\nabla X_{\delta}}_{L^p}^{\delta^{-1}t},
	\qquad \text{for}\  0<t\le \frac{p\delta^{-1}}{t}.
\end{equation}
Note that \eqref{i8} is perfectly coherent with \eqref{i2} and \eqref{i7}.


\smallskip

\autoref{main in introduction} allows also to describe the Sobolev regularity of $X_t$ after the critical time $\frac{\beta}{C_1}$.
In this case we can measure the regularity in the scale of \textit{fractional} Sobolev spaces: what happens, roughly, is that $X_t$ admits a derivative of order $\frac{\beta}{Ct}\wedge 1$ in $L^1$ for any $t\in [0,T]$ and again the conclusion is sharp in the scale of Sobolev spaces. Look at \autoref{th: Main 1} for the rigorous statement written in terms of solution of the transport equation and to \autoref{th: example 2} for the example that underlines its sharpness.

\smallskip 


Another simple outcome of \autoref{main in introduction} is the following: if the gradient of the drift satisfies an integrability condition slightly stronger than \eqref{Hpp}, for instance 
\begin{equation*}
\sup_{t\in [0,T]}	\int_{\T^d}\exp\set{\beta |\nabla b_t(x)|} \di x<\infty
\qquad \text{for}\ any\ \beta>0,
\end{equation*}
then $X_t$ belongs to $W^{1,p}$ for any $1\le p<\infty$. Basically it follows from the explicit expressions of $q_t$ and the critical time in \eqref{i2}, look at \autoref{cor:regularity of flows} for more details. 
On the other hand, we have an example (see \autoref{th: example1}) ensuring the existence of a drift satisfying a relaxed version of \eqref{Hpp}, i.e.
\begin{equation*}
\sup_{t>0} \int_{B_2}\exp\left\lbrace \frac{|\nabla b(t,x)|}{\log(1+|\nabla b_t(x)|)^a}\right\rbrace
\di x<\infty
\quad
\forall a>0,
\end{equation*}
whose flow does not belong to any Sobolev space, even of fractional order, for any $t>0$. 
Roughly, it amounts to say that the exponential integrability condition for $\nabla b_t$, that we assume in \eqref{Hpp}, is a threshold condition in order to hope for a Sobolev regularity of the flow map.


The examples we have been mentioning in this introduction are the content of \autoref{section:Examples}; they are all based on a technique introduced recently in \cite{AlbertiCrippaMazzuccato18} by Alberti, Crippa and Mazzucato.

\paragraph*{}

Let us finally spend a few words on the main idea behind the proof of \autoref{main in introduction}. Our strategy builds upon the technique introduced by Crippa and De Lellis in \cite{CrippaDeLellis08} for the quantitative study of generalized flows in the DiPerna-Lions-Ambrosio theory. 
The authors of the present paper have already used similar ideas in \cite{BrueNguyen18A} to obtain sharp regularity estimates for solutions of the continuity equation in the scale of \textit{log-Sobolev} spaces assuming a Sobolev regularity on the drift.
In order to explain a main technical point of the strategy let us recall the standard argument to prove that flow maps inherit the Lipschitz regularity of velocity fields.
When $X_t$ is associated to a uniformly $K$-Lipschitz vector field $b$, using the very definition of flow map, we have
\begin{equation*}
	\frac{\di}{\di t} |X_t(x)-X_t(y)|\le |b_t(X_t(x))-b_t(X_t(y))|\le K |X_t(x)-X_t(y)|
	\qquad \forall x,y\in \setR^d,
\end{equation*}
that together with a Gr\"onwall lemma gives
\begin{equation*}
	|X_t(x)-X_t(y)|\le |x-y|e^{t K }\qquad\forall x, y\in \setR^d,\ t\in [0,T].
\end{equation*}
Note that we have identified both $b$ and $X$ with periodic functions in $\setR^d$.

In order to make a variant of this strategy work in our context we need to consider a weak version of the Lipschitz inequality
\begin{equation}\label{i4}
	|b_t(x)-b_t(y)|\le K|x-y|\qquad\forall x,y\in \setR^d,\ t\in [0,T],
\end{equation}
that is not anymore available assuming just \eqref{Hpp}.

In our setting a natural replacement of \eqref{i4} is the log-Lipschitz property \eqref{eq: log-lipschitz estimate1} that, if plugged in the Gr\"onwall argument above, gives
\begin{equation}\label{i3}
 |X_t(x)-X_t(y)|\le C |x-y|^{e^{-Ct/\beta}},
\end{equation}
see \autoref{subsection: regularity of flows} and the discussion therein for more details.
Even though \eqref{i3} is sharp in the scale of H\"older spaces (see \cite{BahouriChemin94}) it is not suitable for our purposes, indeed it cannot give either integer Sobolev regularity or approximation results by means Lipschitz functions. Moreover \eqref{i3} cannot even implies our result in the case of fractional Sobolev spaces \autoref{th: Main 1} since in our case the regularity dissipates in time with rate $\sim \frac{\beta}{t}$ (that is the sharp rate) while in \eqref{i3} the rate is $\sim e^{-Ct/\beta}$. 
Let us point out that the use of the log-Lipschitz property \eqref{eq: log-lipschitz estimate1} for the study of \eqref{ODE}, \eqref{CE} and related problems coming from PDE nowadays is consider standard, see for instance \cite{BahouriChemin94,CheminLerner95,Zuazua02}.

In this paper we adopt a change of prospective. We forget about the log-Lipschitz property and we take into account a different ingredient that has been already used by Crippa and De Lellis in the Sobolev setting. They have replaced \eqref{i4} with the well-known inequality
\begin{equation}\label{i6}
|b_t(x)-b_t(y)|\le C_d|x-y|(M|\nabla b_t|(x)+M|\nabla b_t|(y)),
\qquad \forall x,y\in \setR^d, \ t\in [0,T],
\end{equation}
available for any Sobolev map (see \cite{Stein} for its proof), where $M$ denotes the Hardy-Littlewood maximal operator. Assuming $\nabla b_t\in L^p$ for $p>1$ one has in turn $M|\nabla b_t|\in L^p$ (it is a general property of the maximal function, see \cite[Theorem 1]{Stein2}) and it leads to a quantitative weak version of \eqref{i4} that is suitable for the study of the regularity of $X_t$.

Under the assumption \eqref{Hpp} we can write a version of \eqref{i6} as follows: there exists a nonnegative function $h_t$ such that 
\begin{equation}\label{i5}
	|b_t(x)-b_t(y)|\le |x-y|\frac{C_d}{\beta}h_t(x)
	\quad \forall x,y\in \T^d,\ t\ge 0
	\quad\text{and}\ \sup_{t\in [0,T]}\int_{\T^d} \exp\left\lbrace h_t(x)\right\rbrace\di x< \infty
\end{equation}
where $C_d$ depends only on $d$, see \autoref{lemma:exponential lusin}.
This technical ingredient is the correct one to replace \eqref{i4} in the Gr\"onwall argument.
We refer to \autoref{section:Regularity} for more details.


\paragraph{Notations.}
We denote by $\T^d:=\setR^d/\mathbb{Z}^d$ the flat torus of dimension $d\ge 1$ endowed with its geodesic distance $\dist$ and its Haar measure $\leb^d$. We denote by $B_r(x)$ the geodesic ball of radius $r>0$ centered at $x\in\T^d$.

We often identify $\T^d$ with $[0,1)^d$, in this way we can write
\begin{equation}\label{eq:distance}
\dist(x,y):=\min\set{|x-y-k|\ :\ k\in \mathbb{Z}^d\ |k|\le 2},
\end{equation} 
where $|\ \cdot\ |$ is the Euclidean distance in $\setR^d$. Under this identification the Haar measure in $\T^d$ coincides with the Lebesgue measure on the square, while  scalar functions can be identified $f:\T^d\to \setR$ with $1$-periodic functions on $\setR^d$. 
We often use the double notation $f(t,x)=f_t(x)$ for functions depending both in the space and in the time variable.

We write
\begin{equation*}
\dashint_E f\di \mu =\frac{1}{\mu(E)} \int_E f \di x,
\end{equation*}
to denoted the average integral and
\begin{equation*}
Mf(x):=\sup_{r>0} \dashint_{B_r(x)} |f(y)| \di y,
\qquad
\forall~x\in \mathbb{R}^d,
\end{equation*}
to denote the Hardy-Littlewood maximal function.

We often use the expression $a\lesssim_c b$ to mean that there exists a universal constant $C$ depending only on $c$ such that $a\leq C b$. The same convention is adopted for $\gtrsim_c$ and $\simeq_c$.


\section{Regularity results}\label{section:Regularity}
In this section we present regularity results for flows and solutions of the transport equation associated to drifts satisfying \eqref{Hpp}. Let us begin by introducing a functional class.

\begin{definition}\label{def: F}
	Let $0<\alpha\le 1$ and $0<p\le \infty$ be fixed.
	We say that $f\in L^p(\T^d)$ belongs to $F^{\alpha}_p$ if
	\begin{equation*}
	[f]_{F^{\alpha}_p}:=
	\inf \left\lbrace \norm{g}\in L^p(\T^d):\ |f(x)-f(y)|\le\dist(x,y)^{\alpha}(g(x)+g(y))
	\quad \text{for every}\ x,y\in \T^d
	\right\rbrace<\infty.
	\end{equation*} 
	We set $\norm{f}_{F^{\alpha}_p}:=\norm{f}_{L^p}+[f]_{F^{\alpha}_p}$. 
\end{definition}
These spaces have already appeared in the literature (see for instance \cite{BahouriChemin94}) and they coincide with the Triebel-Lizorkin class $F^{\alpha}_{p,\infty}$ when $\alpha\in (0,1)$ and $p>1$ (see \cite[Proposition 3.2]{BahouriChemin94}).
The Hajlasz characterization of Sobolev spaces \cite{Hajlasz} gives 
\begin{equation}\label{F vs Sobolev}
F_p^1= W^{1,p}(\T^d)\quad\text{for any}\ p>1,
\quad\text{and}\quad 
F^1_1\subset W^{1,1}(\T^d).
\end{equation}
While, for $0<\alpha<1$, the class $F^{\alpha}_p$ is related to fractional Sobolev spaces (see \cite{AdamsFournier75})
\begin{equation*}
	 W^{\alpha,p}(\T^d):=\set{f\in L^p(\T^d):\ [f]_{W^{\alpha,p}}<\infty}
	 \qquad
	 \alpha\in (0,1),\ p\ge 1
\end{equation*}
where
\begin{equation}
[f]_{W^{\alpha, p}}:=\left( \int_{(0,1]^d} \int_{\T^d} \frac{|f(x+h)-f(x)|^p}{|h|^{d+ps}} \di x \di h \right)^{1/p},
\end{equation}
is the socalled Gagliardo's seminorm. Precisely we have
\begin{equation}\label{F vs W}
	W^{\alpha,p}(\T^d)\subset F^{\alpha}_p \subset W^{\alpha^\prime,p}(\T^d)
	\qquad
	\text{for any}\ 0<\alpha^\prime<\alpha<1,\ p>1,
\end{equation}
the proof of the first inclusion follows form \cite[Proposition 1.13]{BrueNguyen18B} while the latter can be easily checked using the definition of $F_p^{\alpha}$ and Gagliardo's seminorm.

Let us finally mention that, the inequality
\begin{equation*}
	|f(x)-f(y)|=|f(x)-f(y)|^{\theta}|f(x)-f(y)|^{1-\theta}\lesssim \dist(x,y)^{\alpha\theta}(g(x)^{\theta}+g(y)^{\theta})\norm{f}_{L^{\infty}}^{1-\theta},
\end{equation*}
for any $\theta\in (0,1)$, $f\in F^{\alpha}_p$ and $g$ competitors in the definition on $[f]_{F^{\alpha}_p}$ (see \autoref{def: F}), implies the interpolation estimate
\begin{equation}\label{interpolation}
	[f]_{F^{\theta\alpha}_p}\lesssim \norm{f}_{L^{\infty}}^{1-\theta}[f]_{F_{p\theta}^{\alpha}}^{\theta}
	\qquad
	\text{for any}\ \theta\in (0,1), \alpha\in (0,1), \ p>0,
\end{equation}
that will play a role in the sequel.

This being said we are ready to state our main result.
\begin{theorem}\label{th: MMain} 
	Let $b:[0,T]\times\T^d\to\setR^d$ satisfy
     \begin{equation*}
     \sup_{t\in [0,T]}	\int_{\T^d}\exp\set{\beta |\nabla b_t(x)|} \di x=K<\infty
     \quad \text{for some $\beta>0$ and}\  
     \norm{\div b}_{L^{\infty}([0,T]\times \T^d)}=L<\infty.
     \end{equation*}
     where the derivatives are understood in the sense of distributions. Then, there exist constants $C_1>0$ and $C_2>2$ depending only on $d$, such that
	for any $0<\alpha \le 1$, $p\ge 1$ and $u_0\in F^{\alpha}_p$ the unique solution $u\in L^{\infty}([0,T];L^p( \T^d))$ of \eqref{CE} satisfies
	\begin{equation}\label{Z1}
	[u_t]_{F^{\alpha}_{p_t}} \le [u_0]_{F^{\alpha}_p}(e^{Lt}C_2K)^{1/p_t}
	\quad
	\forall t\in[0,T]
	\quad
	\text{with}
	\quad
	p_t:=\frac{p}{1+\beta^{-1}\alpha p C_1 t}.
	\end{equation}
\end{theorem}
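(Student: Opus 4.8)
The plan is to deduce \eqref{Z1} from the Lagrangian estimate \autoref{main in introduction} combined with the representation formula \eqref{lagrangianity}, using one change of variables and one application of H\"older's inequality. First I would invoke \autoref{main in introduction}: since $b$ satisfies exactly its hypotheses, it supplies a nonnegative $g_t\in L^{q_t}(\T^d)$, $q_t=\beta/(C_1t)$, with
\begin{equation*}
\dist\big((X_t)^{-1}(x),(X_t)^{-1}(y)\big)\le g_t(x)\,\dist(x,y)\qquad\text{for every }x,y\in\T^d,
\end{equation*}
and $\norm{g_t}_{L^{q_t}}\le e^{t^2LC_1/\beta}(C_2K)^{tC_1/\beta}$; since the left side is symmetric in $x,y$ I may replace $g_t(x)$ by $\min\{g_t(x),g_t(y)\}$. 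By the existence/uniqueness discussion recalled above (\autoref{remark:existence and uniqueness}) the unique $L^\infty([0,T];L^p)$ solution of \eqref{CE} is $u_t=u_0\circ(X_t)^{-1}$, and $(X_t)^{-1}$ is an everywhere-defined homeomorphism.

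Next I would build a competitor for $[u_t]_{F^\alpha_{p_t}}$. Given $\eps>0$, pick $g\in L^p(\T^d)$ with $\norm{g}_{L^p}\le[u_0]_{F^\alpha_p}+\eps$ and $|u_0(x)-u_0(y)|\le\dist(x,y)^\alpha(g(x)+g(y))$ for all $x,y$, and set $G:=g_t^{\alpha}\,\big(g\circ(X_t)^{-1}\big)$. Then for all $x,y$,
\begin{equation*}
|u_t(x)-u_t(y)|\le\dist\big((X_t)^{-1}x,(X_t)^{-1}y\big)^{\alpha}\Big(g((X_t)^{-1}x)+g((X_t)^{-1}y)\Big)\le\dist(x,y)^{\alpha}\big(G(x)+G(y)\big),
\end{equation*}
the last inequality because $\min\{g_t(x),g_t(y)\}^{\alpha}g((X_t)^{-1}x)\le G(x)$ and symmetrically for the other term. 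So $G$ is admissible in \autoref{def: F} for $u_t$ at exponents $(\alpha,p_t)$, whence $[u_t]_{F^\alpha_{p_t}}\le\norm{G}_{L^{p_t}}$.

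It then remains to estimate $\norm{G}_{L^{p_t}}$. Since $\tfrac{\alpha}{q_t}+\tfrac1p=\tfrac1{p_t}$ and $p_t\le\min\{p,q_t/\alpha\}$, the generalized H\"older inequality gives $\norm{G}_{L^{p_t}}\le\norm{g_t}_{L^{q_t}}^{\alpha}\norm{g\circ(X_t)^{-1}}_{L^p}$. The change of variables $x=X_t(y)$ with the measure-distortion bound $(X_t^{-1})_{\#}\leb^d\le e^{Lt}\leb^d$ (standard when $\norm{\div b}_\infty\le L$) gives $\norm{g\circ(X_t)^{-1}}_{L^p}\le e^{Lt/p}\norm{g}_{L^p}$, and the same bound shows $u_t\in L^{p_t}(\T^d)$ since $p_t\le p$; for the other factor \autoref{main in introduction} gives $\norm{g_t}_{L^{q_t}}^\alpha\le e^{\alpha t^2LC_1/\beta}(C_2K)^{\alpha tC_1/\beta}$. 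Letting $\eps\to0$ I would get $[u_t]_{F^\alpha_{p_t}}\le e^{\alpha t^2LC_1/\beta+Lt/p}(C_2K)^{\alpha tC_1/\beta}[u_0]_{F^\alpha_p}$, and conclude by noting $\alpha t^2LC_1/\beta+Lt/p=Lt(\alpha tC_1/\beta+\tfrac1p)=Lt/p_t$, $\alpha tC_1/\beta\le\tfrac1{p_t}$, and $C_2K\ge C_2>1$ (as $\leb^d(\T^d)=1$ forces $K\ge1$), so the bound improves to $(e^{Lt}C_2K)^{1/p_t}[u_0]_{F^\alpha_p}$; a harmless enlargement of $C_2$ makes $C_2>2$.

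The only real difficulty is \autoref{main in introduction} itself — the Crippa--De Lellis Gr\"onwall scheme in which the unavailable Lipschitz bound \eqref{i4} is replaced by the exponential Lusin inequality \eqref{i5}, keeping track of the linear-in-$t$ degradation of the exponent $q_t=\beta/(C_1t)$; granting it, the reduction above is routine. The points I would double-check are the admissibility of the generalized H\"older step ($p_t\le\min\{p,q_t/\alpha\}$) and the measure-distortion bound for the flow under $\div b\in L^\infty$.
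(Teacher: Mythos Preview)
Your proof is correct and follows the same Lagrangian strategy as the paper: represent $u_t=u_0\circ(X_t)^{-1}$, feed in the flow estimate of \autoref{prop:Regularity of flow}, and build a competitor for $[u_t]_{F^\alpha_{p_t}}$ out of a competitor for $[u_0]_{F^\alpha_p}$.

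The one genuine difference is in how you handle the cross term $g_t(x)^{\alpha}\,g\big((X_t)^{-1}(y)\big)$. The paper applies Young's inequality with exponents $\big(\tfrac{q_t}{\alpha p_t},\tfrac{p}{p_t}\big)$ to split this product into a sum of an $x$-piece and a $y$-piece, obtaining a competitor of the form $\bar g_t(x)+\bar g_t(y)$ with $\bar g_t=\tfrac{\alpha p_t}{q_t}g_t^{q_t/p_t}+\tfrac{p_t}{p}(h\circ(X_t)^{-1})^{p/p_t}$. You instead observe that the one-sided flow bound $\dist((X_t)^{-1}x,(X_t)^{-1}y)\le g_t(x)\dist(x,y)$ is symmetric in $x,y$, so $g_t(x)$ may be replaced by $\min\{g_t(x),g_t(y)\}$; this makes $G=g_t^{\alpha}\cdot(g\circ(X_t)^{-1})$ a legitimate competitor directly, and a single H\"older step with $\tfrac{\alpha}{q_t}+\tfrac1p=\tfrac1{p_t}$ finishes. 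Your route is a shade cleaner --- no Young constants, and the arithmetic $\alpha t^2LC_1/\beta+Lt/p=Lt/p_t$ drops out neatly --- while the paper's route does not need the symmetry observation. Both rely on exactly the same hard input, namely \autoref{prop:Regularity of flow}, and your closing remarks about checking the H\"older exponents and the compressibility bound are the right things to verify.
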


Before proving \autoref{th: MMain} we present a remark and two important corollaries.

\begin{remark}\label{remark:existence and uniqueness}
	Let us explain why under the assumption \eqref{Hpp} the Cauchy problem \eqref{CE} admits
	\begin{equation}
	u(t,x):=u_0((X_t)^{-1}(x))
	\quad t\in [0,T],
	\end{equation}
	where $X$ is the flow map of $b$ (see the discussion in \autoref{subsection: regularity of flows} for what concerns $X$), as a unique solution in $L^{\infty}([0,T];L^p(\T^d))$, for any $p\ge 1$.
	 
	First of all notice that $u(t,x):=u_0((X_t)^{-1}(x))$ is a weak solution of the transport equation in $L^{\infty}([0,T];L^p(\T^d))$ when $u_0\in L^p(\T^d)$ (look at the introduction for the definition of weak solution). Therefore to prove the sought claim it suffices to show the uniqueness property for \eqref{CE} in the class $L^{\infty}([0,T];L^1(\T^d))$.
	
	Using \autoref{lemma: log-lipschitz regularity} we deduce that $b$ is Log-Lipschitz continuous and, if we further assume that $\div b=0$, then \cite[Theorem 1.2]{BahouriChemin94} grants the uniqueness result we are looking for. It actually implies uniqueness in the larger class of signed measure, but we are not interested in this general case.
	In order to get rid of the assumption $\div b=0$ we can consider the recent result \cite[Theorem 1.1 and Remark 1.5]{CaCr18} together with the simple observation that in our case forward-backward curves are always trivial due to the pointwise uniqueness of trajectories in \eqref{ODE}.
\end{remark}

\begin{corollary}\label{th: Main 1} 
Let $b$, $L$, $C_1$ and $C_2$ be as in \autoref{th: MMain}.	
Then for any $0<\alpha \le 1$, $p\ge 1$ and $u_0\in L^{\infty}(\T^d)\cap W^{\alpha,p}(\T^d)$ the unique solution $u\in L^{\infty}([0,T]\times \T^d)$ of \eqref{CE} satisfies
\begin{equation}\label{conclusion 1}
	u_t\in W^{\alpha_t,p}(\T^d)
	\qquad
	\text{with}
	\quad
	\alpha_t:=\alpha\  \frac{1}{1+2\beta^{-1}\alpha p C_1 t}.
\end{equation}
Moreover, if $p>1$, for any $1\le p^\prime<p$ and $0<\alpha^\prime<\alpha$ it holds
\begin{equation}\label{conclusion 2}
	u_t\in W^{\alpha^\prime,p^\prime}(\T^d)
	\qquad
	 t< \left(\frac{1}{p^\prime}-\frac{1}{p}\right)\frac{\beta}{\alpha C_1}.
\end{equation}
Finally, if we assume $\alpha=1$ the conclusion \eqref{conclusion 2} can be strengthen as follows
\begin{equation}\label{conclusion 3}
\norm{\nabla u_t}_{L^{p'}}\lesssim_{d,p} \norm{\nabla u_0}_{L^p} (e^{Lt} C_2 K)^{\frac{1+\beta^{-1}pC_1 t}{p}}
\qquad
 t< \left(\frac{1}{p'}-\frac{1}{p}\right)\frac{\beta}{C_1},
\end{equation}
for any $1\le p'<p$.
\end{corollary}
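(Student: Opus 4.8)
The plan is to deduce everything from \autoref{th: MMain} applied with the functional class $F^{\alpha}_p$, together with the embeddings \eqref{F vs Sobolev}, \eqref{F vs W} and the interpolation estimate \eqref{interpolation}. For the first assertion \eqref{conclusion 1}: since $u_0\in W^{\alpha,p}\subset F^{\alpha}_p$ by the first inclusion in \eqref{F vs W}, \autoref{th: MMain} gives $u_t\in F^{\alpha}_{p_t}$ with $p_t=p/(1+\beta^{-1}\alpha p C_1 t)$. If $p_t>1$ and $\alpha<1$ we would like $F^{\alpha}_{p_t}\subset W^{\alpha',p_t}$ for $\alpha'<\alpha$; but the target exponent in \eqref{conclusion 1} is $\alpha_t=\alpha/(1+2\beta^{-1}\alpha p C_1 t)$, strictly smaller than $\alpha$, which suggests one does \emph{not} want to lose a full $W^{\alpha,p_t}$; instead I would interpolate. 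Using \eqref{interpolation} with a suitable $\theta$ trades the exponent $p_t$ (which may be $\le 1$, where the $F$–$W$ embeddings are unavailable) and the boundedness $u_t\in L^\infty$ (propagated because $u_0\in L^\infty$ and $X_t$ is a measure-preserving-up-to-$e^{Lt}$ homeomorphism) to land in $F^{\theta\alpha}_{p}$ with the \emph{original} integrability $p$; choosing $\theta$ so that $p\theta=p_t$ and then $\theta\alpha=\alpha_t$ — here the factor $2$ in $\alpha_t$ versus the factor $1$ in $p_t$ is exactly what makes room for the strict inequality needed to pass from $F$ to $W^{\alpha_t,p}$ via \eqref{F vs W}. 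So the skeleton is: $W^{\alpha,p}\hookrightarrow F^{\alpha}_p \xrightarrow{\text{Thm \ref{th: MMain}}} F^{\alpha}_{p_t}\xrightarrow{\text{interp. \eqref{interpolation}}} F^{\theta\alpha}_{p}\hookrightarrow W^{\alpha_t,p}$, with the numerology chosen to make the last arrow legal.

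For \eqref{conclusion 2}, I would again start from $u_t\in F^{\alpha}_{p_t}$ and use interpolation \eqref{interpolation} plus $u_t\in L^\infty$ to reach $F^{\theta\alpha}_{p_t/\theta}$ for any $\theta\in(0,1)$; one then checks that the constraint $t<\left(\tfrac{1}{p'}-\tfrac{1}{p}\right)\tfrac{\beta}{\alpha C_1}$ is precisely equivalent to $p_t>p'$ (a direct computation from the formula $p_t=p/(1+\beta^{-1}\alpha p C_1 t)$, since $\tfrac{1}{p_t}-\tfrac{1}{p}=\beta^{-1}\alpha C_1 t$). Given $p_t>p'\ge 1$ one picks $\theta<1$ with $p_t/\theta$ still $>1$ and $\theta\alpha<\alpha$ arbitrarily close to $\alpha$, lands in some $F^{\alpha''}_{p''}$ with $\alpha''<\alpha$, $p''>1$, and then the second inclusion in \eqref{F vs W}, $F^{\alpha''}_{p''}\subset W^{\alpha',p''}$ for $\alpha'<\alpha''$, together with $p''\ge p'$ on the torus (finite measure, so $L^{p''}\subset L^{p'}$ and the Gagliardo seminorm likewise decreases), gives $u_t\in W^{\alpha',p'}$. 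One has to be a little careful that the exponents can be chosen simultaneously; since both target exponents $\alpha'<\alpha$ and $p'<p$ are strict and $p_t>p'$ strictly, there is an open set of admissible $\theta$, so this is just bookkeeping.

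For \eqref{conclusion 3} ($\alpha=1$) the quantitative bound comes directly: \autoref{th: MMain} with $\alpha=1$ gives $[u_t]_{F^1_{p_t}}\le [u_0]_{F^1_p}(e^{Lt}C_2K)^{1/p_t}$ with $p_t=p/(1+\beta^{-1}pC_1 t)$, and the condition $t<(\tfrac{1}{p'}-\tfrac{1}{p})\tfrac{\beta}{C_1}$ is exactly $p_t>p'>1$. Then $F^1_{p_t}=W^{1,p_t}$ by the first identity in \eqref{F vs Sobolev}, with $\|\nabla u_t\|_{L^{p_t}}\lesssim_{d,p_t}[u_t]_{F^1_{p_t}}$, and since $p_t>p'$ on the finite-measure torus we have $\|\nabla u_t\|_{L^{p'}}\le \|\nabla u_t\|_{L^{p_t}}$; combining, and bounding $[u_0]_{F^1_p}\lesssim_{d,p}\|\nabla u_0\|_{L^p}$ (again from \eqref{F vs Sobolev}) and $1/p_t=(1+\beta^{-1}pC_1 t)/p$, yields \eqref{conclusion 3}. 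The constant $\lesssim_{d,p}$ absorbs the Hajlasz-type constant, which depends on $d$ and on the exponent, but the latter ranges in the bounded interval $(p',p]$ so it can be taken uniform, depending only on $d$ and $p$.

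The main obstacle I anticipate is purely one of exponent arithmetic: one must verify that the single interpolation parameter $\theta$ can be chosen to satisfy \emph{all} the competing requirements at once — keeping the integrability above $1$ so the Triebel–Lizorkin–Sobolev embeddings of \eqref{F vs W} apply, while simultaneously realizing the sharp smoothness exponents $\alpha_t$ (resp. $\alpha'$) claimed in the statement — and that the factor $2$ appearing in $\alpha_t$ is not merely an artifact but the exact cost of converting an $F^{\alpha_t}_{p}$ bound into a $W^{\alpha_t,p}$ bound through a strict inclusion. Once the identities $\tfrac{1}{p_t}-\tfrac1p=\beta^{-1}\alpha C_1 t$ and the equivalence of the time constraints with $p_t>p'$ are recorded, the rest is a routine chase through \eqref{F vs Sobolev}, \eqref{F vs W}, \eqref{interpolation} and the $L^\infty$-propagation of $u_t$ noted in \autoref{remark:existence and uniqueness}.
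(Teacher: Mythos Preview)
Your treatment of \eqref{conclusion 1} and \eqref{conclusion 3} is correct and matches the paper's proof essentially line for line: interpolate via \eqref{interpolation} with $\theta=\theta_t:=1/(1+\beta^{-1}\alpha p C_1 t)$ so that $p\theta_t=p_t$, observe $\theta_t\alpha>\alpha_t$ thanks to the factor $2$, and embed via \eqref{F vs W}; for $\alpha=1$ use \eqref{F vs Sobolev} directly.

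For \eqref{conclusion 2}, however, you take an unnecessary detour and then make a genuine error. The interpolation step is not needed at all: the paper simply records that $t<\bigl(\tfrac{1}{p'}-\tfrac{1}{p}\bigr)\tfrac{\beta}{\alpha C_1}$ is equivalent to $p_t>p'$, uses the trivial monotonicity $[u_t]_{F^\alpha_{p'}}\le [u_t]_{F^\alpha_{p_t}}$ (this is just $\|g\|_{L^{p'}}\le\|g\|_{L^{p_t}}$ on the finite-measure torus, applied to the function $g$ in \autoref{def: F}), and then invokes \eqref{F vs W} to pass from $F^\alpha_{p'}$ to $W^{\alpha',p'}$. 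Your route instead lands in $W^{\alpha',p''}$ with $p''>p'$ and then asserts that ``the Gagliardo seminorm likewise decreases'' to reach $W^{\alpha',p'}$. This is false: writing
\[
[f]_{W^{\alpha',q}}^q=\int_{(0,1]^d}\int_{\T^d}\left(\frac{|f(x+h)-f(x)|}{|h|^{\alpha'}}\right)^{q}\frac{\di x\,\di h}{|h|^{d}},
\]
the measure $|h|^{-d}\di h$ on $(0,1]^d$ is infinite, so there is no H\"older or Jensen monotonicity in $q$ at fixed $\alpha'$; in Besov language, $W^{\alpha',p''}=B^{\alpha'}_{p'',p''}$ does not embed into $B^{\alpha'}_{p',p'}=W^{\alpha',p'}$ because the microscopic (third) index moves the wrong way. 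The fix is simply to reorder your two steps: descend in integrability first at the $F$-scale (where the monotonicity is legitimate), and only then apply the $F\to W$ embedding. Once you do that, the interpolation you inserted becomes superfluous.
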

\begin{proof}
	Using \eqref{interpolation} with $\theta_t:=\frac{1}{1+\beta^{-1}\alpha p C_1 t}$ and \autoref{th: MMain} we get
	\begin{equation*}
		[u_t]_{F^{\theta_t \alpha}_p}\lesssim \norm{u_0}_\infty^{1-\theta_t}[f]_{F^{\alpha}_{p_t}}^{\theta_t}
		\le \norm{u_0}_\infty^{1-\theta_t}[u_0]_{F^{\alpha}_p}^{\theta_t}(e^{Lt}C_2K)^{1/p},
	\end{equation*}
	since $\theta_t\alpha>\alpha_t$ \eqref{conclusion 1} follows from \eqref{F vs W}.
	Let us address \eqref{conclusion 2}. If $t< \left(\frac{1}{p'}-\frac{1}{p}\right)\frac{\beta}{\alpha C_1}$ then $p_t>p'$, thus \autoref{th: MMain} and \eqref{F vs W} gives
	\begin{equation*}
	[u_t]_{W^{\alpha',p'}}\overset{\eqref{F vs W}}\lesssim_{\alpha,\alpha',p,p'}	[u_t]_{F^{\alpha}_{p'}}\lesssim [u_t]_{F^{\alpha}_{p_t}}\overset{\eqref{Z1}}\lesssim [u_0]_{F^{\alpha}_p}(e^{Lt}C_2K)^{1/p_t}
	\qquad
	\forall\ 0<\alpha'<\alpha.
	\end{equation*} 
	Repeating the same argument with $\alpha=1$ and taking into account \eqref{F vs Sobolev} we get \eqref{conclusion 3}.
\end{proof}

\begin{corollary}\label{cor: main1}
	Let $b:[0,T]\times\T^d\to\setR^d$ satisfy
    \begin{equation*}
     \sup_{t\in [0,T]}	\int_{\T^d}\exp\set{\beta |\nabla b_t(x)|} \di x<\infty
     \quad \text{for \textit{every} $\beta>0$ and}\  
     \norm{\div b}_{L^{\infty}([0,T]\times \T^d)}<\infty.
    \end{equation*}
    where derivatives are understood in the sense of distributions. Then 
	for any $0<\alpha \le 1$, $p> 1$ and $u_0\in L^{\infty}(\T^d)\cap W^{s,p}(\T^d)$ the unique solution $u\in L^{\infty}([0,T]\times \T^d)$ of \eqref{CE} satisfies
    \begin{equation}\label{cor conclusion1}
    	u_t\in W^{\alpha',p}(\T^d)
    	\qquad
   	\text{for any}\quad 0<\alpha'<\alpha,\quad   t\in [0,T].
    \end{equation}
	In the case $\alpha=1$ we also have
     \begin{equation}\label{cor conclusion 2}
     	u_t\in W^{1,p'}(\T^d)
     	\qquad
     \text{for any}\quad 1\le p'<p,\quad  t\in [0,T].
     \end{equation}
\end{corollary}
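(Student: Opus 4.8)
The plan is to read this off directly from \autoref{th: Main 1}, using crucially that under the present, stronger hypothesis the exponential integrability bound $\sup_{t}\int_{\T^d}\exp\{\beta|\nabla b_t|\}\di x<\infty$ is at our disposal for \emph{every} $\beta>0$, and that the constants $C_1,C_2$ furnished by \autoref{th: MMain} depend only on the dimension $d$ --- in particular not on $\beta$. Thus for each fixed $\beta$ the vector field $b$ satisfies the hypotheses of \autoref{th: MMain} (and hence of \autoref{th: Main 1}) with some finite $K=K(\beta)$ and $L=\norm{\div b}_{L^\infty}$, and we are free to let $\beta\to\infty$ in the resulting conclusions so that the fractional and integrability losses recorded there disappear.

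To obtain \eqref{cor conclusion1} I would fix $t\in[0,T]$, $p>1$ and $0<\alpha'<\alpha$, note that $u_0\in L^\infty\cap W^{\alpha,p}(\T^d)\subset F^\alpha_p$ (via \eqref{F vs W} when $\alpha<1$, and $F^1_p=W^{1,p}$ when $\alpha=1$), and apply \eqref{conclusion 1} of \autoref{th: Main 1}: this gives $u_t\in W^{\alpha_t,p}(\T^d)$ with $\alpha_t=\alpha(1+2\beta^{-1}\alpha p C_1 t)^{-1}$. Since $C_1$ is independent of $\beta$, one has $\alpha_t\uparrow\alpha$ as $\beta\to\infty$, so choosing $\beta$ large enough that $\alpha_t\ge\alpha'$ and using the embedding $W^{\alpha_t,p}\subset W^{\alpha',p}$ concludes this part. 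For \eqref{cor conclusion 2} I would take $\alpha=1$, fix $t\in[0,T]$ and $1\le p'<p$, and invoke \eqref{conclusion 3}: it yields $\norm{\nabla u_t}_{L^{p'}}<\infty$ as soon as $t<(\tfrac1{p'}-\tfrac1p)\tfrac{\beta}{C_1}$; since this threshold tends to $+\infty$ with $\beta$, a large enough choice of $\beta$ covers every $t\in[0,T]$ and gives $u_t\in W^{1,p'}(\T^d)$.

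I do not expect a genuine obstacle in this argument: the entire content is the observation that, in \autoref{th: Main 1}, both the exponent $\alpha_t$ and the critical time are governed by $\beta^{-1}$ while all remaining constants are dimensional, so a hypothesis valid for all $\beta$ removes every loss. The only points that require a moment of care are to check that $C_1,C_2$ in \autoref{th: MMain} really are $\beta$-independent (this is part of its statement) and that the assumption $u_0\in L^\infty$ is precisely what allows us to invoke \autoref{th: Main 1} rather than only \autoref{th: MMain}.
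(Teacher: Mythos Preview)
Your proposal is correct and follows the same idea as the paper's proof: since the exponential integrability holds for every $\beta>0$ and the constant $C_1$ is purely dimensional, one lets $\beta\to\infty$ so that the loss recorded in $p_t$ (equivalently $\alpha_t$) disappears. The paper phrases this by applying \autoref{th: MMain} directly and then invoking the embeddings \eqref{F vs Sobolev} and \eqref{F vs W}, whereas you route through \autoref{th: Main 1} (which has already carried out the interpolation with the $L^\infty$ bound); the content is the same, and your path via \eqref{conclusion 1} is arguably a touch more transparent for \eqref{cor conclusion1} since it keeps the integrability exponent $p$ fixed from the outset.
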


\begin{proof}
	Let us first assume $p>1$.
	An immediate application of \autoref{th: MMain} gives $u_t\in F^{\alpha}_{p'}$ for any $1\le p'<\infty$ thus the sought conclusions follow from \eqref{F vs Sobolev} and \eqref{F vs W}. In order to extend \eqref{cor conclusion1} to the case $p=1$ it is enough to apply the Sobolev embedding theorem \eqref{eq:Sobolev embedding} stated below.
\end{proof}

\begin{remark}\label{remark:improvement p=1}
	The conclusions \eqref{conclusion 2} can be extended to the case $p=1$
	as follows: for any $\alpha'<\alpha$ there exists $\bar T:=\bar T(\alpha',\alpha, p, \beta, C_1)>0$ such that
	\begin{equation*}
	u_t\in W^{\alpha',p}
	\qquad
	\forall t<\bar T.
	\end{equation*}
	In order to do so it is enough to use the Sobolev embedding theorem:
		\begin{equation}\label{eq:Sobolev embedding}
		\norm{f}_{W^{s,r}}\lesssim_{s,s',r,r'} \norm{f}_{W^{s',r'}},
         ~~
		\text{with}
		~~
		r'=\frac{dr}{d-(s-s')r},\ s,s'\in (0,1],\ r,r'\in (0,\infty),
		\end{equation}
		where $\norm{f }_{W^{\alpha,p}}:=\norm{f}_{L^p}+[f]_{W^{\alpha,p}}$.
		We refer to \cite{AdamsFournier75} and \cite{Daodiaznguyen} for more details.	
\end{remark}

The remaining part of this section is dedicated to the proof of \autoref{th: MMain}. As we have anticipated in the introduction, we carry out a Lagrangian approach, meaning that the core of our argument is a regularity result for flows, whose proof is based on a technique introduced by Crippa and De Lellis \cite{CrippaDeLellis08} in the context of DiPerna-Lions-Ambrosio's theory \cite{DiPernalions, Ambrosio04}.

\subsection{Regularity of flows}\label{subsection: regularity of flows}

In this subsection we prove \autoref{main in introduction}. It is restated below for reader's convenience.

\begin{theorem}[Regularity of the flow]\label{prop:Regularity of flow}
	Let $b:[0,T]\times\T^d\to\setR^d$ satisfy
	\begin{equation*}
	\sup_{t\in [0,T]}	\int_{\T^d}\exp\set{\beta |\nabla b(t,x)|} \di x=K<\infty
	\quad\text{for some $\beta>0$ and}\ 
	\norm{\div b}_{L^{\infty}([0,T]\times \T^d)}=L<\infty.
	\end{equation*}
	Then, for any $x,y\in \T^d$ and $t\in [0,T]$, we have
	\begin{equation}\label{eq:regflow}
	\dist(X_t(x),X_t(y))\le g_t(x)\dist(x,y)
	\quad
	\text{and}
	\quad
	\dist((X_t)^{-1}(x),(X_t)^{-1}(y))\le g_t(x)\dist(x,y),
	\end{equation}
	for some nonnegative function $g_t$ that fulfills 
	\begin{equation}\label{eq: key}
	\norm{g_t}_{L^{q_t}}\le e^{\frac{t^2LC_1}{\beta}}( C_2K)^{\frac{tC_1}{\beta}}
	\qquad \forall t\in [0,T],
	\quad \text{with}\quad q_t:=\frac{\beta}{C_1t},
	\end{equation}
	where $C_1>0$ and $C_2>0$ depend only on $d$.
\end{theorem}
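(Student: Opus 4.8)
The plan is to follow the Crippa--De Lellis scheme \cite{CrippaDeLellis08}: a Gr\"onwall estimate for the geodesic distance between two trajectories in which the (nonexistent) Lipschitz constant of $b_t$ is replaced by an exponentially integrable weight, followed by a conversion of the resulting pointwise bound into the $L^{q_t}$ estimate \eqref{eq: key} via Jensen's inequality and a compressibility bound. \textbf{Step 1 (exponential Lusin estimate for $b$).} I would first produce, for a dimensional constant $C_d$, nonnegative functions $h_t$ on $\T^d$ with
\begin{equation*}
|b_t(x)-b_t(y)|\le\frac{C_d}{\beta}\,\dist(x,y)\bigl(h_t(x)+h_t(y)\bigr)\quad\text{for all }x,y\in\T^d,\qquad\sup_{t\in[0,T]}\int_{\T^d}e^{h_t}\di x\le C_2K
\end{equation*}
(this is the content of \autoref{lemma:exponential lusin}, cf.\ \eqref{i5}). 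One starts from the classical pointwise inequality $|b_t(x)-b_t(y)|\le c_d\,\dist(x,y)\bigl(M|\nabla b_t|(x)+M|\nabla b_t|(y)\bigr)$ valid for Sobolev maps \cite{Stein} (when $\dist(x,y)$ exceeds the injectivity radius one instead compares both points to $\dashint_{\T^d}b_t\,\di z$, using that $\T^d$ has bounded diameter). The obstacle is that $M$ does not map $\exp L$ to $\exp L$; one circumvents it by noting that exponential integrability only constrains the \emph{high} $L^p$ norms, on which $M$ is bounded with a dimensional constant. A layer-cake estimate together with $\int e^{\beta|\nabla b_t|}\di x\le K$ gives $\norm{\nabla b_t}_{L^p}\le(p!)^{1/p}K^{1/p}\beta^{-1}$, while $\norm{M}_{L^p\to L^p}\le c_d$ for $p\ge2$ \cite{Stein2}; setting $h_t:=\lambda\beta\,M|\nabla b_t|$ with $\lambda=\lambda(d)$ small, the Taylor series of $\int e^{h_t}\di x$ has $n$-th term ($n\ge2$) bounded by $\tfrac{(\lambda\beta)^n}{n!}\bigl(c_d(n!)^{1/n}K^{1/n}\beta^{-1}\bigr)^n=(\lambda c_d)^nK$, a convergent geometric series times $K$ once $\lambda c_d<1$; the $n=0,1$ terms are absorbed crudely using $K\ge1$ and H\"older. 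Choosing $C_d:=c_d/\lambda$ reconciles the two displays.

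\textbf{Step 2 (Gr\"onwall along trajectories).} Since $b$ obeys the log-Lipschitz bound \eqref{eq: log-lipschitz estimate1} (\autoref{lemma: log-lipschitz regularity}), $X_t$ is a homeomorphism with unique trajectories, so $X_t(x)\ne X_t(y)$ whenever $x\ne y$. Passing to the lifted flow on $\setR^d$ (where the distance is Euclidean, hence smooth off the diagonal) and using Step 1, the function $s\mapsto\dist(X_s(x),X_s(y))$ is absolutely continuous with a.e.\ derivative at most $|b_s(X_s(x))-b_s(X_s(y))|\le\tfrac{C_d}{\beta}\dist(X_s(x),X_s(y))\bigl(h_s(X_s(x))+h_s(X_s(y))\bigr)$, so Gr\"onwall's lemma yields
\begin{equation*}
\dist(X_t(x),X_t(y))\le\dist(x,y)\exp\Bigl(\tfrac{C_d}{\beta}\int_0^th_s(X_s(x))\di s\Bigr)\exp\Bigl(\tfrac{C_d}{\beta}\int_0^th_s(X_s(y))\di s\Bigr).
\end{equation*}
I would then set $g_t(x):=\exp\bigl(\tfrac{2C_d}{\beta}\int_0^th_s(X_s(x))\di s\bigr)$; the right-hand side above is $\le\dist(x,y)\max\{g_t(x),g_t(y)\}$, which is \eqref{eq:regflow} (this symmetric form is exactly what the Lusin-type corollary — Lipschitz continuity of $X_t$ on $\{g_t<\lambda\}$ — requires). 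For $(X_t)^{-1}$ one applies the same argument to the reversed drift $\hat b_s:=-b_{t-s}$, which satisfies \eqref{Hpp} with the same $\beta,K,L$, and finally replaces $g_t$ by the maximum of the forward and backward weights.

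\textbf{Step 3 (the $L^{q_t}$ bound).} Fix $C_1:=2C_d$, so that $q_t=\beta/(C_1t)$ makes $\tfrac{2C_dq_t}{\beta}=\tfrac1t$. By Jensen's inequality applied to the probability measure $t^{-1}\di s$ on $[0,t]$ and the convex function $\exp$,
\begin{equation*}
\norm{g_t}_{L^{q_t}}^{q_t}=\int_{\T^d}\exp\Bigl(\tfrac1t\int_0^th_s(X_s(x))\di s\Bigr)\di x\le\frac1t\int_0^t\int_{\T^d}e^{h_s(X_s(x))}\di x\,\di s.
\end{equation*}
Because $\div b\in L^\infty$, the pushforward $(X_s)_\#\leb^d$ has density at most $e^{sL}$ (bounded compressibility of the flow; cf.\ the discussion in \autoref{remark:existence and uniqueness}), so $\int_{\T^d}e^{h_s(X_s(x))}\di x=\int_{\T^d}e^{h_s}\di(X_s)_\#\leb^d\le e^{sL}\int_{\T^d}e^{h_s}\di z\le e^{sL}C_2K$ by Step 1. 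Since $\tfrac1t\int_0^te^{sL}\di s\le e^{tL}$, this gives $\norm{g_t}_{L^{q_t}}^{q_t}\le e^{tL}C_2K$, hence $\norm{g_t}_{L^{q_t}}\le(e^{tL}C_2K)^{1/q_t}=e^{C_1Lt^2/\beta}(C_2K)^{C_1t/\beta}$, which is \eqref{eq: key} (the numerical factor coming from the forward/backward maximum is absorbed into $C_2$ after raising to the power $q_t$).

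\textbf{Main obstacle.} The heart of the proof is Step 1: extracting from the exponential integrability of $\nabla b$ a Lipschitz-type control of $b$ whose coefficient is \emph{again} exponentially integrable, with the sharp $\beta$-dependence and a dimensional bound on $\int e^{h_t}/K$. This is delicate precisely because $M$ is unbounded near $L^1$, which forces one to use that exponential integrability is a purely large-$p$ phenomenon and to track both the Stirling-type growth of $\norm{\nabla b_t}_{L^p}$ and the operator norm $\norm{M}_{L^p\to L^p}$. The remaining ingredients — the Gr\"onwall comparison, the choice of $g_t$, the Jensen step and the change of variables — are routine; the only other minor points are the homeomorphism and uniqueness properties of the flow, already available from \eqref{eq: log-lipschitz estimate1}, and the bookkeeping at the cut locus of $\T^d$, handled by lifting to $\setR^d$.
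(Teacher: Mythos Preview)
Your overall architecture (Crippa--De Lellis Gr\"onwall, then Jensen plus the compressibility bound \eqref{eq: compressibility}) is exactly the paper's, and Steps~2--3 are correct. There is, however, one genuine discrepancy in Step~1 that prevents you from obtaining \eqref{eq:regflow} \emph{as stated}.

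Your Lusin-type inequality is the two-sided one
\(
|b_t(x)-b_t(y)|\le \tfrac{C_d}{\beta}\dist(x,y)\bigl(h_t(x)+h_t(y)\bigr),
\)
so after Gr\"onwall you get the symmetric bound
\(
\dist(X_t(x),X_t(y))\le\sqrt{g_t(x)g_t(y)}\,\dist(x,y)\le\max\{g_t(x),g_t(y)\}\dist(x,y),
\)
which is \emph{not} \eqref{eq:regflow}: the theorem asks for a weight depending on $x$ alone. There is no cheap passage from the symmetric to the one-sided form (the obvious candidate $\sup_{y}\dist(X_t(x),X_t(y))/\dist(x,y)$ has no reason to lie in $L^{q_t}$). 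The paper avoids this by proving in \autoref{lemma:exponential lusin} a \emph{one-sided} Lusin estimate,
\[
|f(x)-f(y)|\le \tfrac{C_1}{\beta'}\dist(x,y)\Bigl(1+\log\bigl(c_d\,M(e^{\beta'|\nabla f|})(x)\bigr)\Bigr),
\]
obtained from Morrey's inequality (which for $p>d$ compares $f(x)$ and $f(y)$ to an $L^p$ average on a ball centred at $x$ only) summed over $p>d$. Plugging this into the Gr\"onwall argument with $\beta'=\beta/2$ yields directly $\dist(X_t(x),X_t(y))\le g_t(x)\dist(x,y)$. Your symmetric version would still suffice for \autoref{th: MMain}, since the class $F^\alpha_p$ in \autoref{def: F} is defined by a symmetric inequality, but it does not prove \autoref{prop:Regularity of flow} literally.

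A smaller difference: instead of your Taylor-series bound on $\int e^{\lambda\beta M|\nabla b_t|}$ (tracking $\norm{M}_{L^p\to L^p}$ for all $p\ge2$), the paper applies $M$ once to $e^{(\beta/2)|\nabla b_t|}$ and uses only the $L^2$-boundedness of $M$: after Jensen, $\int g_t^{q_t}\lesssim\dashint_0^t\int\bigl(M e^{(\beta/2)|\nabla b_s|}\bigr)^2\circ X_s\le e^{tL}\dashint_0^t\int e^{\beta|\nabla b_s|}\le e^{tL}K$. Both routes work; the paper's is shorter and explains the halving $\beta'=\beta/2$.
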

Before proving \autoref{prop:Regularity of flow}, let us recall that, under the assumption
\begin{equation*}
  \sup_{t>0}\int_{\T^d}\exp\set{\beta |\nabla b_t(x)|} \di x=:K(\beta)<\infty
  \qquad
  \text{for some}\ \beta>0,	
\end{equation*}
there exists a unique classical solution of the problem \eqref{ODE}. Indeed thanks to Lemma \eqref{lemma: log-lipschitz regularity} we know that $b$ is Log-Lipschitz, namely 
\begin{equation}\label{z13}
|b_t(x)-b_t(y)|\le \frac{C}{\beta}\dist(x,y)\log\left( \frac{CK}{\dist(x,y)^d} \right)
\qquad
\forall x,y\in \T^d\ \forall t\ge 0.
\end{equation}
In particular $b$ satisfies the Osgood condition, so it admits a unique solution for any initial data $x\in \T^d$.

Moreover, by mean of \eqref{z13}, it is possible to show that $X_t$ is H\"older continuous:
\begin{equation}\label{eq: Holder flow}
\left(CK \right)^{-1-e^{Ct/\beta}} \dist(x,y)^{e^{Ct/\beta}}\le \dist(X_t(x),X_t(y))\le \left(CK \right)^{1+e^{-Ct/\beta}} \dist(x,y)^{e^{-Ct/\beta}},
\end{equation}
for some $C>2$ depending only on $d$. To see this we use again \eqref{z13} obtaining
\begin{equation*}
\abs{\frac{\di}{\di t} \dist(X_t(x),X_t(y))}\le |b_t(X_t(x))-b_t(X_t(y))|
\le \frac{C}{\beta}\dist(X_t(x),X_t(y))\log\left( \frac{CK}{\dist(X_t(x),X_t(y))^d} \right),
\end{equation*}
that amounts to
\begin{equation}\label{zzz2}
\left|\frac{\di}{\di t}\log\log\left( \frac{CK}{\dist(X_t(x),X_t(y))^d}\right)\right|\leq \frac{C}{\beta},
\end{equation}
where the constant in the left hand side may be bigger then the one in the previous line but still depends only on $d$. Thus 
\eqref{zzz2} immediately implies \eqref{eq: Holder flow}.

If we further assume $\norm{\div b}_{L^{\infty}}:=L<\infty$ we also deduce
\begin{equation}\label{eq: compressibility}
e^{-tL}\leb^d \le (X_t)_{\#} \leb ^d\le e^{tL}\leb^d
\qquad
\text{for any}\ t\ge 0.
\end{equation}
In particular when $b$ is divergence-free,  $X_t$ is a measure preserving map for any $t\ge 0$.
The property \eqref{eq: compressibility} can be checked observing that $X$ coincides with the unique \textit{Regular Lagrangian flow} associated to $b$ according to Ambrosio's axiomatization (see \cite{Ambrosio04}).

Let us refer to \cite{BahouriChemin94}, \cite{CheminLerner95} and \cite{Zuazua02} for further details on well-posedness results for flows and solutions of the continuity and transport equation associated to Log-Lipschitz drifts.

We conclude this subsection by proving \autoref{prop:Regularity of flow} and stating a simple corollary.

\begin{proof}[Proof of \autoref{prop:Regularity of flow}]
	Fix $x,y\in \T^d$, recalling \eqref{eq:distance} we have
	\begin{align*}
	\abs{\frac{\di}{\di t} \dist(X_t(x),X_t(y))}\le & |b_t(X_t(x))-b_t(X_t(y))|\\
	\le & \dist(X_t(x),X_t(y))\frac{2C_1}{\beta} \left(1+\log\left(c_dM\left(\exp\left\{2^{-1}\beta |\nabla b_t|\right\}\right)(X_t(x))\right) \right),
	\end{align*}
	where in the second line we used \eqref{lemma:exponential lusin} with $\beta'=\beta/2$. 
	Setting 
	\begin{equation*}
		g_t(x):=\exp\left\lbrace  \frac{2C_1}{\beta}\int_0^t \left(1+\log\left(c_dM\left( \exp\left\{2^{-1}\beta |\nabla b_s|\right\}\right)(X_s(x))\right) \right)\di s
		\right\rbrace,
	\end{equation*}
    the Gr\"onwall inequality gives \eqref{eq:regflow}.
	It remains to prove \eqref{eq: key}. Let us fix $t>0$ and set $q_t:=\frac{\beta}{C_1 t}$, using Jensen's inequality and \eqref{eq: compressibility} we deduce
	\begin{align*}
	\int_{\T^d} g_t(x)^{q_t} \di x = & \int_{\T^d} \exp\left\lbrace 2+\dashint_0^t 2\log\left(c_dM\left(\exp\left\{2^{-1}\beta |\nabla b_s|\right\}\right)(X_s(x))\right)\di s \right\rbrace \di x\\
	 \lesssim_d & \int_{\T^d} \dashint_0^t \left[ M\left(\exp\left\{2^{-1}\beta |\nabla b_s|\right\}\right)(X_s(x))\right]^2\di s \di x\\
	\lesssim_d & e^{tL}\dashint_0^t \int_{\T^d} \left[ M\left(\exp\left\{2^{-1}\beta |\nabla b_s|\right\}\right)(x)\right]^2\di s \di x.
	\end{align*}
	Exploiting the boundness of the maximal function between $L^2$ spaces (see \cite{Stein}) we get the sought conclusion:
    \begin{align*}
	\int_{\T^d} g_t(x)^{q_t} \di x\lesssim_d  e^{tL} \dashint_0^t \int_{\T^d} \exp\left\lbrace \beta |\nabla b_s(x)|\right\rbrace \di x \di s
	\le   e^{tL} K.
	\end{align*}
\end{proof}
An immediate consequence of \autoref{prop:Regularity of flow} is the following.
\begin{corollary}\label{cor:regularity of flows}
	Let $b:[0,T]\times\T^d\to\setR^d$ satisfy
	\begin{equation*}
	\sup_{t\in [0,T]}	\int_{\T^d}\exp\set{\beta |\nabla b_t(x)|} \di x<\infty
	\quad \text{for \textit{every} $\beta>0$ and}\  
	\norm{\div b}_{L^{\infty}([0,T]\times \T^d)}<\infty.
	\end{equation*}
	 Then, for any $t\in [0,T]$, $X_t$ and its inverse belong to $W^{1,p}(\T^d;\setR^d)$ for every $1\le p<\infty$.
\end{corollary}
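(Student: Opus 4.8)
The plan is to reduce the statement to \autoref{prop:Regularity of flow}, exploiting that here its hypothesis holds for \emph{every} $\beta>0$, so that the exponent $q_t=\beta/(C_1t)$ appearing in \eqref{eq: key} may be taken arbitrarily large.

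Fix $t\in(0,T]$ and $p\in[1,\infty)$; the case $t=0$ is trivial since $X_0=\Id$. Choose $\beta$ large enough that $q_t:=\frac{\beta}{C_1t}\ge p$. By assumption $K:=\sup_{s\in[0,T]}\int_{\T^d}\exp\set{\beta|\nabla b_s(x)|}\di x<\infty$ and $L:=\norm{\div b}_{L^\infty([0,T]\times\T^d)}<\infty$, hence \autoref{prop:Regularity of flow} provides a nonnegative function $g_t\in L^{q_t}(\T^d)$ such that both $X_t$ and $(X_t)^{-1}$ satisfy \eqref{eq:regflow}. Since $\T^d$ has finite measure and $q_t\ge p$, we get $g_t\in L^p(\T^d)$.

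Then I would upgrade these pointwise metric bounds to Sobolev regularity exactly as in the passage from \autoref{main in introduction} to \eqref{i2}: since $\dist(X_t(x),X_t(y))\le\dist(x,y)(g_t(x)+g_t(y))$ with $g_t\in L^p(\T^d)$ and $X_t$ is continuous (H\"older, by \eqref{eq: Holder flow}), the Hajlasz characterization of Sobolev spaces --- after the routine local lifting needed to handle the $\T^d$-valued target --- gives $X_t\in F^1_p(\T^d;\setR^d)$, whence $X_t\in W^{1,p}(\T^d;\setR^d)$ by \eqref{F vs Sobolev} (recall $F^1_p=W^{1,p}$ for $p>1$ and $F^1_1\subset W^{1,1}$). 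The identical argument applied to the second inequality of \eqref{eq:regflow} yields $(X_t)^{-1}\in W^{1,p}(\T^d;\setR^d)$. As $p\in[1,\infty)$ is arbitrary, this proves the corollary. There is no real obstacle here: \autoref{prop:Regularity of flow} already carries all the analytic content, the only mildly technical points being the choice of a large $\beta$ and the by-now standard translation of a Lusin-type bound like \eqref{eq:regflow} into genuine Sobolev regularity through the Hajlasz characterization.
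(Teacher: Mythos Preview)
Your argument is correct and matches the paper's own proof almost verbatim: the paper fixes $t>0$, applies \eqref{eq: key} with $\beta=pC_1t$ (so $q_t=p$), and concludes via \eqref{F vs Sobolev}. The only cosmetic differences are that you take $q_t\ge p$ rather than $q_t=p$ and spell out the Hajlasz step in more detail, but the approach is identical.
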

\begin{proof}
	Let us fix $t>0$. Using \eqref{eq: key} with $\beta=pC_1 t$ we deduce $g_t\in L^p(\T^d)$. The sought conclusion follows from \eqref{F vs Sobolev}.
\end{proof}

We conclude the section with the proof of \autoref{th: MMain}.

\subsection{Proof of \autoref{th: MMain}}
    We assume without loss of generality that $[u_0]_{F^{\alpha}_p}=1$.
	It is enough to prove that, for any $t\in [0,T]$, there exists a positive function $\bar g_t$ such that
	\begin{equation}\label{v1}
		|u_t(x)-u_t(y)|\lesssim \dist(x,y)^{\alpha}(\bar g_t(x)+\bar g_t(y))
		\qquad
		\forall x,y\in \T^d,
	\end{equation}
	and
	\begin{equation}\label{v2}
		\norm{\bar g_t}_{L^{p_t}}\lesssim_{d,p,\alpha} (e^{tL} C_2 K)^{1/p_t}
		\qquad
		\text{with}
		\quad
	    p_t=\frac{p}{1+\beta^{-1}\alpha p C_1 t},
	\end{equation}
	where $C_1$ and $C_2$ are as in \autoref{prop:Regularity of flow}.

	As we mentioned before we exploit the Lagrangian representation formula
	\begin{equation}\label{z15}
		u_t(x):=u_0((X_t)^{-1}(x))
		\qquad
		\text{for any}\ x\in \T^d,\ \text{and}\ t\in[0,T],
	\end{equation}
	where $X_t$ is the solution of \eqref{ODE}. Note that the inverse of $X_t$ is well-defined thanks to \autoref{prop:Regularity of flow}.
	By \autoref{def: F} we know that there exists $h:\T^d\to [0,\infty]$ satisfying
	\begin{equation}\label{z14}
	|u_0(x)-u_0(y)|\le \dist(x,y)^{\alpha}(h(x)+h(y))
	\qquad
	\forall x,y\in \T^d
	\quad\text{and}\quad 
	\norm{h}_{L^p}\le 1+\eps,
	\end{equation}
	where $0<\eps<1$ is fixed.
	Building upon \eqref{z15}, \eqref{z14} and \autoref{prop:Regularity of flow}(ii) we get
	\begin{align*}
		|u_t(x)-u_t(y)| = & |u_0((X_t)^{-1}(x))-u_0((X_t)^{-1}(y))|\\
		                \le & \dist((X_t)^{-1}(x),(X_t)^{-1}(y))^{\alpha}(h((X_t)^{-1}(x))+h((X_t)^{-1}(y)))\\
		                \le & \dist(x,y)^{\alpha} g_t(x)^{\alpha}(h((X_t)^{-1}(x))+h((X_t)^{-1}(y))).
	\end{align*}
	Setting $q_t=\frac{\beta}{C_1 t}$, where $C_1$ is as in \autoref{prop:Regularity of flow}, and using the Young inequality with exponents $\left(\frac{q_t}{\alpha p_t},\frac{p}{p_t}\right)$ we deduce
	\begin{align*}
		g_t(x)^{\alpha}&(h((X_t)^{-1}(x))+h((X_t)^{-1}(y)))\\
		\lesssim & \left(\frac{\alpha p_t}{q_t} g_t(x)^{q_t/p_t}+\frac{p_t}{p}
		            (h((X_t)^{-1}(x))^{p/p_t}\right)
		+\left(\frac{\alpha p_t}{q_t} g_t(y)^{q_t/p_t}+\frac{p_t}{p}
		            (h((X_t)^{-1}(y))^{p/p_t}\right)\\
		=: & \bar g_t(x)+\bar g_t(y).
	\end{align*}
	Thanks to \eqref{eq: compressibility} and \eqref{eq: key} we get
	\begin{align*}
		\norm{\bar g_t}_{L^{p_t}}\le & 
		\frac{\alpha p_t}{q_t}\norm{g_t}_{L^{q_t}}^{q_t/p_t}+\frac{p_t}{p}e^{\frac{tL}{p_t}}\norm{h}_{L^p}^{p/p_t}
		\le  e^{\frac{tL}{p_t}}(1+\eps)^{p/p_t}\left(\frac{\alpha p_t}{q_t}(C_2K)^{1/p_t}+\frac{p_t}{p}\right)\\
		\le & (1+\eps)^{p/p_t}(e^{tL} C_2 K)^{1/p_t},
	\end{align*}
	letting $\eps\to 0$ we conclude the proof.

\section{Counterexamples}\label{section:Examples}
In this section we prove that \autoref{th: Main 1} and \autoref{prop:Regularity of flow} are optimal in the scale of Sobolev spaces by mean of three different examples.
The first one tries to answer the question whether the integrability condition
\begin{equation*}
\sup_{t>0}	\int_{\T^d}\exp\set{\beta |\nabla b_t(x)|} \di x<\infty
\qquad\text{for some}\ \beta>0,
\end{equation*}
assumed in \autoref{th: MMain} and \autoref{prop:Regularity of flow} can be relaxed.
\begin{theorem}\label{th: example1}
	There exist a divergence free velocity field $b$ satisfying
	\begin{equation}\label{ine1}
	    \supp b_t\subset  B_{1/2}\quad  \forall t\ge 0;
	    \quad
	    \quad
		\sup_{t>0} \int_{B_{1/2}}\exp\left\lbrace \frac{|\nabla b_t(x)|}{\log(1+|\nabla b_t(x)|)^a}\right\rbrace
		\di x<\infty
		\quad
	    \forall a>0,
	\end{equation}
	and $u_0\in C_c^{\infty}(B_{1/2})$ such that $\supp u_t\subset  B_{1/2}~\forall t\ge 0$ and 
	\begin{equation*}
		[u_t]_{W^{s,p}}=\infty
		\qquad
		\forall t>0, \ \forall s>0, \ \forall p\ge 1,
	\end{equation*}
	where $u_t$ is the solution in $L^{\infty}([0,\infty)\times \setR^d)$ of \eqref{CE} with initial data $u_0$.
\end{theorem}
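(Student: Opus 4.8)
The plan is to adapt the cellular construction of Alberti, Crippa and Mazzucato \cite{AlbertiCrippaMazzuccato18}, run on a time scale accelerated near $t=0$. One takes $b=\sum_{n\ge1}b^{(n)}$, where the $n$-th ``generation'' $b^{(n)}$ is active only on a time window $I_n=[t_{n+1},t_n]$ with $t_n\searrow0$ (so that finer generations act closer to the origin), is supported in a single cube $Q_n$ of side $\rho_n\searrow0$, and the cubes $Q_n$ are pairwise disjoint, contained in $B_{1/4}$, with $\sum_n\leb^d(Q_n)<\infty$. Each $b^{(n)}$ is a space--time rescaling of one fixed smooth, divergence free field $w$ compactly supported in the unit cube and chosen --- exactly as in \cite{AlbertiCrippaMazzuccato18} --- so that its (unit--time) flow stretches lengths by a prescribed factor $\lambda$ at a gradient cost $\lesssim\log\lambda$; transplanting $w$ into $Q_n$ over the window $I_n$, of length $\delta_n:=t_n-t_{n+1}$, with stretching factor $e^{\Lambda_n\delta_n}$, then yields $\norm{\nabla b^{(n)}_s}_{L^\infty(Q_n)}\lesssim\Lambda_n$ for $s\in I_n$ and $|\nabla X_{t_n}|\gtrsim e^{c\Lambda_n\delta_n}$ on a fixed fraction of $Q_n$. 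Finally one fixes $u_0\in C_c^\infty(B_{1/2})$ with $\nabla u_0$ nonvanishing at the point where the $Q_n$ accumulate, so that $\mathrm{osc}_{Q_n}u_0\simeq\rho_n$ for $n$ large.

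The crux is the choice of the parameters $\rho_n,\Lambda_n,\delta_n$, which must meet two competing budgets. The \emph{gradient threshold}: for every fixed $a>0$ the quantity $\leb^d(Q_n)\exp\{\Lambda_n/\log(1+\Lambda_n)^a\}$ must stay bounded in $n$; this holds (and no stronger, exponential, integrability does) precisely when $\Lambda_n\le\log(1/\leb^d(Q_n))\,\psi\big(\log(1/\leb^d(Q_n))\big)$ with $\psi$ growing more slowly than every power of the logarithm, so one takes $\Lambda_n:=\log(1/\rho_n^{\,d})\,\log\log\log(1/\rho_n)$. The \emph{stretching budget}: for every fixed $s>0$ and $p\ge1$ one needs $sp\,\Lambda_n\delta_n$ to eventually dominate $(d+p)\log(1/\rho_n)$, which amounts to $\delta_n\log\log\log(1/\rho_n)\to\infty$, while at the same time $\sum_n\delta_n<\infty$ so that the whole cascade is completed in finite time; e.g.\ $\delta_n=n^{-2}$ together with $\rho_n$ so tiny that $\log\log\log(1/\rho_n)\ge n^3$ will do. Note that the first budget forces $\sup_t\int\exp\{\beta|\nabla b_t|\}\,\di x=\infty$ for every $\beta>0$, so $b$ lies genuinely outside the reach of \autoref{th: MMain}.

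With these choices the argument is three short steps. \emph{Set-up.} Since $\exp\{|\nabla b_t|/\log(1+|\nabla b_t|)^a\}$ dominates any power of $|\nabla b_t|$, the gradient budget gives $b\in L^1_{\mathrm{loc}}([0,\infty);W^{1,q}(B_{1/2};\setR^d))$ for all $q<\infty$, with $\div b=0$ and $\supp b_t\subset B_{1/4}$; hence the unique bounded solution of \eqref{CE} is $u_t=u_0\circ(X_t)^{-1}$, it satisfies $\supp u_t\subset B_{1/2}$ because $b$ is tangent to $\partial B_{1/2}$, and \eqref{ine1} is nothing but the gradient budget summed over the pairwise disjoint, time--separated generations. \emph{Propagation of fine oscillations.} Fix $t>0$. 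Because $t_n\searrow0$, all but finitely many generations have finished acting by time $t$, and --- the cubes being disjoint and left untouched after their own window --- the structure produced on $Q_n$ survives for all later times. On $Q_n$ (for $n$ large) $u_0$ is essentially affine with gradient of size $\simeq1$, so after the stirring $u_t$ carries on $Q_n$ an oscillation of amplitude $\simeq\rho_n$ at scale $\ell_n\simeq\rho_n e^{-c\Lambda_n\delta_n}$, whence $\log(1/\ell_n)\gtrsim\Lambda_n\delta_n\gg\log(1/\rho_n)$; this is where the stretching budget enters. \emph{Conclusion.} Bounding the Gagliardo seminorm from below by the contribution of each $Q_n$ (as in \cite{CrippaDeLellis08, AlbertiCrippaMazzuccato18}),
\[
[u_t]_{W^{s,p}}^p\ \gtrsim\ \sum_{n\ \text{large}}\leb^d(Q_n)\,\rho_n^{\,p}\,\ell_n^{-sp}\ =\ \sum_{n}\exp\Big\{\,sp\,\log(1/\ell_n)-(d+p)\log(1/\rho_n)\,\Big\},
\]
and since $sp\,\log(1/\ell_n)\gtrsim sp\,\Lambda_n\delta_n$ dominates $(d+p)\log(1/\rho_n)$ for each fixed $s>0,\ p\ge1$, the general term tends to $+\infty$; hence $[u_t]_{W^{s,p}}=+\infty$ for every $s\in(0,1)$, $p\ge1$ and every $t>0$, the case $s\ge1$ following from the embedding $W^{s,p}\subset W^{s',p}$, $s'\in(0,1)$.

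The main obstacle is exactly this balancing of budgets. The integrability hypothesis \eqref{ine1} caps the stretching rate available on a set of measure $\mu$ at essentially $\log(1/\mu)\cdot\log\log\log(1/\mu)$, whereas destroying \emph{every} fractional Sobolev norm requires reaching scales $\ell_n$ with $\log(1/\ell_n)/\log(1/\rho_n^{\,d})\to\infty$; insisting moreover that this happen at arbitrarily small times --- so that the loss is \emph{immediate}, in sharp contrast with the definite regularity for small $t$ granted by \autoref{prop:Regularity of flow} under exponential integrability --- forces the windows $\delta_n$ to be summable. The extra $\log\log\log$ room in \eqref{ine1} is precisely what allows $\delta_n\to0$ together with $\delta_n\log\log\log(1/\rho_n)\to\infty$, which is just enough to close the estimate for all $s$ and $p$ simultaneously. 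A second, more routine, ingredient --- carried out essentially as in \cite{AlbertiCrippaMazzuccato18} --- is the construction of the elementary exponentially stretching stirring $w$ with controlled gradient, together with the cellwise lower bound for the Gagliardo seminorm.
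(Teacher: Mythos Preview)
Your plan is in the right spirit but takes a route that is both more elaborate than the paper's and leaves one step insufficiently justified.

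The paper does \emph{not} use time windows and does \emph{not} transport a generic smooth datum. It places rescaled copies $v_n(t,x)=(\lambda_n/\tau_n)\,v(t/\tau_n,(x-x_n)/\lambda_n)$ of the fixed Alberti--Crippa--Mazzucato pair $(v,\rho)$ from \autoref{alberti,Crippa,Mazzucato} on disjoint cubes, with \emph{all} of them running for all $t\ge 0$ at different clocks $\tau_n$; and it takes $u_0=\sum_n\gamma_n\,\rho(0,(x-x_n)/\lambda_n)$, choosing the amplitudes $\gamma_n$ so that the sum is $C_c^\infty$. The point of this choice is that for the specific $\rho$ one already knows $\|\rho(t,\cdot)\|_{\dot W^{s,p}}\gtrsim e^{cst}$ (Proposition~\ref{alberti,Crippa,Mazzucato}(iv)); the blow-up of $[u_t]_{W^{s,p}}$ then follows directly from \autoref{subesti} together with the scaling identity $[\rho_n(t,\cdot)]_{W^{s,p}}=\gamma_n\lambda_n^{d/p-s}[\rho(t/\tau_n,\cdot)]_{W^{s,p}}$. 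Your parameter balance is exactly right and matches theirs: both force the gradient on the $n$-th cube to be of order $\log(1/\text{measure})\cdot\log\log\log(\cdot)$, which is precisely the threshold yielding \eqref{ine1} but not exponential integrability; the paper's concrete choices are $\tau_n^{-1}=\log n\cdot\log\log\log n$, $\lambda_n^d\sim n^{-1}e^{-d\log n/\log\log\log n}$, $\gamma_n=e^{-\log n\cdot\log\log\log\log n}$.

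The step in your outline that is not covered by the references you cite is the passage from ``$|\nabla X_t|\gtrsim e^{c\Lambda_n\delta_n}$ on a fixed fraction of $Q_n$'' to ``$u_t$ has oscillation $\simeq\rho_n$ at scale $\ell_n$'' for a \emph{generic} affine datum. Pointwise stretching of the flow in some direction does not by itself imply growth of the Gagliardo seminorm of $u_0\circ(X_t)^{-1}$: the expanding direction of $(\nabla X_t)^{-1}$ may be transverse to $\nabla u_0$, and neither \cite{CrippaDeLellis08} nor \cite{AlbertiCrippaMazzuccato18} contains the cellwise lower bound you need in that form. One can close this by invoking the \emph{universal} exponential mixing estimate of \cite{AlbertiCrippaMazzuccato16} (decay of $\|\cdot\|_{H^{-s}}$ for \emph{every} mean-zero tracer, hence growth of $\|\cdot\|_{\dot W^{s,p}}$ by interpolation), applied to $u_0-\dashint_{Q_n}u_0$; but then you are essentially back to the paper's mechanism, and the extra machinery of time windows and the generic $u_0$ buys nothing. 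The paper's approach is shorter because it never needs to argue about the flow map at all: the $W^{s,p}$ growth is built into the building block $\rho$ from the start.
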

The second example shows that the conclusions in \autoref{th: Main 1} are sharp in the scale of fractional Sobolev spaces.

\begin{theorem}\label{th: example 2}
     For any $m\in \setN$ and $\lambda>0$ there exist 
     a divergence free velocity field $b$ satisfying
     \begin{equation}\label{zz1}
      \supp b_t\subset  B_{1/2}\quad  \forall t\ge 0;
     \quad
     \quad
     \sup_{t>0} \int_{B_{1/2}}\exp\left\lbrace\lambda|\nabla b_t(x)| \right\rbrace \di x<\infty
     \end{equation}
     and $u_0\in C_c^m(B_{1/2})$ such that the unique solution $u\in L^{\infty}([0,\infty)\times \setR^d)$ of \eqref{CE} with $\supp u_t\subset  B_{1/2}$ for any $t\ge 0$ fulfills
     \begin{itemize}
     	\item[(i)]  $\norm{u_t}_{W^{1,1}}=\infty$ for any $t>c_1\lambda$;
     	\item[(ii)] $[u_t]_{W^{\alpha_t,1}}=\infty$ for any $t>0$, where $\alpha_t:=1\wedge \frac{c_2 \lambda}{t}$ for any $t>0$;
     \end{itemize}
     where $c_1>0$ and $c_2>0$ depend only on $m$ and $d$.
\end{theorem}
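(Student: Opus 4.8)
The plan is to run the self-similar cascade construction of Alberti, Crippa and Mazzucato \cite{AlbertiCrippaMazzuccato18}, tuning its free parameters so that $\nabla b_t$ sits exactly at the threshold of exponential integrability demanded by \eqref{zz1}, while the transported function loses fractional Sobolev regularity at the sharp rate $\sim\lambda/t$ predicted by \autoref{th: Main 1}. \textbf{Building block.} I would first fix, once and for all, a smooth divergence-free vector field $v$ with $\supp v\subset(0,1)^d$ whose time-one flow $\Phi$ is a measure-preserving diffeomorphism of the unit cube that refines the oscillation scale of strip patterns by a definite factor, and record the scaling identity for $v_{\ell,\tau}(x):=\frac{\ell}{\tau}v(x/\ell)$: its time-$\tau$ flow reproduces $\Phi$ on a cube of side $\ell$, it is again divergence-free, and $\|\nabla v_{\ell,\tau}\|_\infty=\tau^{-1}\|\nabla v\|_\infty$ \emph{independently of $\ell$}. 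This scale-invariance of the gradient is precisely what makes the exponential constraint tractable.

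\textbf{The cascade and the exponential budget.} Superpose rescaled copies of $v$ on a nested dyadic hierarchy of cubes inside $B_{1/2}$, running generation $n$ on a time interval of length $\tau_n$ with active set $A_n=\bigcup\mathcal F_n$ of measure $\mu_n$. Then $b$ is divergence-free, $\supp b_t\subset B_{1/2}$, and during generation $n$ one has $|\nabla b_t|\le c\,\tau_n^{-1}\mathbf 1_{A_n}$, hence
\begin{equation*}
\int_{B_{1/2}}\exp\{\lambda|\nabla b_t|\}\,dx\ \le\ |B_{1/2}|+\mu_n\,e^{c\lambda/\tau_n},
\end{equation*}
so that \eqref{zz1} holds as soon as $\sup_n\mu_n e^{c\lambda/\tau_n}<\infty$, i.e. $\tau_n\gtrsim\lambda/\log(1/\mu_n)$. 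This inequality is the only constraint the drift imposes on the schedule $(\ell_n,\mu_n,\tau_n)$.

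\textbf{Initial datum and loss estimate.} Take $u_0\in C_c^m(B_{1/2})$ to be the self-similar profile attached to the hierarchy — flat to order $m$ off the cubes of $\mathcal F_n$ and equal to a fixed rescaled bump of amplitude $\asymp\ell_n^{\,m}$ on them — which is exactly $C^m$ and is where the constants acquire their $m$-dependence. Using the Lagrangian formula $u_t=u_0\circ X_t^{-1}$ (valid by \autoref{remark:existence and uniqueness}) together with the refinement property of $\Phi$, one identifies the multi-scale pattern carried by $u_t$ as governed by the generations already completed by time $t$; evaluating the Gagliardo seminorm of such a pattern yields a lower bound of the form $[u_t]_{W^{s,1}}\gtrsim\sum_k(\mathrm{mass}_k)(\mathrm{scale}_k)^{-s}$, arranged to diverge exactly for $s>1\wedge c_2\lambda/t$, and likewise $\|\nabla u_t\|_{L^1}=\infty$ for $t>c_1\lambda$. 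A limiting argument over the truncated cascades — legitimate because the budget above is met uniformly in $n$, keeping $b$ in the exponential class, $u_0$ in $C^m$, and all supports inside $B_{1/2}$ — then delivers a single admissible pair $(b,u_0)$.

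\textbf{Main obstacle.} The hard part is the simultaneous calibration of $(\ell_n,\mu_n,\tau_n)$. Since the exponential budget only lets $\tau_n$ shrink logarithmically in $1/\mu_n$, and since $[u_t]_{W^{\alpha_t,1}}=\infty$ for \emph{every} $t>0$ forces $u_t$ to be genuinely multi-scale at every positive time (a finite collection of oscillation scales would give a finite seminorm), one must let $\mu_n$ decay very fast and then recover enough mass of fine oscillation — through the stretching encoded in $\Phi$ and the nesting of the hierarchy — so that the $W^{\alpha_t,1}$ seminorm already diverges while the drift stays integrable. Making the resulting Sobolev threshold land \emph{exactly} at $\alpha_t=1\wedge c_2\lambda/t$, matching the positive \autoref{th: Main 1}, and verifying that the infinite superposition is a bona fide velocity field with the claimed solution, is the technical heart of the argument, and is precisely where the machinery and estimates of \cite{AlbertiCrippaMazzuccato18} are used essentially.
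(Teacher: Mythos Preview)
Your outline describes a \emph{time-staged, nested} cascade: generation $n$ is active on its own time interval of length $\tau_n$, and the pattern in $u_t$ is ``governed by the generations already completed by time $t$''. This architecture cannot produce the statement as written. Since each $\tau_n>0$, at any fixed time $t<\tau_1$ no generation has completed; more generally, for any $t>0$ only finitely many stages have run, so $u_t$ differs from $u_0\in C^m_c$ by composition with a smooth diffeomorphism and hence has \emph{finite} $W^{s,1}$-seminorm for every $s\le 1$. You even flag this (``a finite collection of oscillation scales would give a finite seminorm''), but the proposed remedy --- letting $\mu_n$ decay fast --- does not help: what is needed is infinitely many active scales at every positive time, and a sequential schedule cannot deliver that. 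The ``limiting argument over the truncated cascades'' does not rescue the conclusion either, since a limit of examples each losing regularity only after time $T_N\to\infty$ does not give one that loses regularity for all $t>0$.

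The paper's construction avoids this by working with a countable family of \emph{pairwise disjoint} cubes $Q_n\subset B_{1/4}$, each carrying a space--time rescaled copy $v_n(t,x)=\tfrac{\lambda_n}{\tau_n}v(t/\tau_n,(x-x_n)/\lambda_n)$ of a single smooth building block $(v,\rho)$ (\autoref{alberti,Crippa,Mazzucato}) whose solution already satisfies $\|\rho_t\|_{\dot W^{s,p}}\gtrsim e^{cst}$. All the $v_n$ run simultaneously from $t=0$; the crucial mechanism is the \emph{time acceleration} $t\mapsto t/\tau_n$ with $\tau_n\to 0$, which makes the contribution of $\rho_n(t,\cdot)$ to the seminorm behave like $\gamma_n\lambda_n^{d-s}e^{cst/\tau_n}$ for every fixed $t>0$. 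Choosing $\tau_n\sim\lambda/(\log n\cdot\log\log\log n)$, $\lambda_n=e^{-\log n\cdot\log\log\log n}$ and $\gamma_n=\lambda_n^{\,m}$, one has $\lambda_n^d e^{c_0\lambda/\tau_n}=\lambda_n^{d-1/2}$ (giving \eqref{zz1}), while a disjoint-support lower bound for the Gagliardo seminorm (\autoref{subesti}) yields $[u_t]_{W^{s,1}}\gtrsim\sum_n e^{(-2c_0\lambda(d+m-s)+cst)/\tau_n}$, which diverges precisely when $t\ge 2c_0\lambda(d+m-s)/(cs)$. The geometry (disjoint vs.\ nested) and the dynamics (parallel time-rescaled copies vs.\ sequential stages) are the essential differences; the exponential-growth property of the building block does the work that your cascade tries to do by hand.
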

\begin{remark}
	An immediate consequence of \autoref{th: example 2} is that the flow map $X_t$ associated to $b$ satisfies $\norm{\nabla X_t}_{W^{1,1}}=\infty$ for any $t>c\lambda$, where $c=c(d)>0$.
\end{remark}

The last example shows that, in general, we cannot hope for the Lipschitz regularity of the flow associated to $b$ when $\norm{\nabla b}_{L^{\infty}}=\infty$, even under a very strong integrability assumption (in the Orlicz sense)  on $\nabla b$. In particular we cannot extend \autoref{cor:regularity of flows} and \eqref{cor conclusion 2} to the case $p=\infty$.

\begin{theorem}\label{th: example3}
	Let us fix an increasing $\Phi:[0,\infty)\to [0,\infty)$ satisfying $\Phi(0)=0$ and $\lim_{r\to\infty}\Phi(r)=\infty$.
	Then there exist a divergence free velocity field $b$ satisfying
	\begin{equation}\label{zz2}
	\supp b_t\subset  B_{1/2}\quad  \forall t\ge 0;
	\quad
	\quad
	\sup_{t>0} \int_{B_{1/2}} \Phi(|\nabla b_t(x)|) \di x<\infty,
    \end{equation}	
    and  $u_0\in C_c^1(B_{1/2})$ such that $\supp u_t\subset  B_{1/2}~\forall t\ge 0$ and 
    \begin{equation*}
    	\norm{\nabla u_t}_{L^{\infty}}=\infty
    	\qquad
    	\forall t>0,
    \end{equation*}
    where $u_t\in L^{\infty}([0,T)\times \setR^d)$ is the solution of \eqref{CE} with initial data $u_0$.
\end{theorem}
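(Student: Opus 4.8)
The plan is to adapt the self-similar construction of Alberti, Crippa and Mazzucato \cite{AlbertiCrippaMazzuccato18}, which the previous two counterexamples already use. The idea is to concatenate, on a geometrically shrinking sequence of disjoint balls $B_{r_k}(x_k)\subset B_{1/2}$, rescaled copies of a single smooth "mixing" building block: a divergence-free autonomous vector field $v$ supported in the unit ball that, acting for a bounded time, strongly distorts a fixed smooth scalar $w_0$, say by producing a gradient whose $L^\infty$ norm is amplified by a large factor $\Lambda_k$. On the $k$-th ball one runs this block at a time scale $\tau_k$ and with a spatial scaling $r_k$, and one chooses the amplitudes $a_k$ (the multiplicative size of $b$ on $B_{r_k}(x_k)$) so small, and $r_k\to 0$ so fast, that the constraint $\int \Phi(|\nabla b_t|)\,dx<\infty$ holds. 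The key point is that the amplification $\Lambda_k$ of $\|\nabla u_t\|_{L^\infty}$ on the $k$-th ball can be made to diverge as $k\to\infty$ using only the \emph{size} of $\nabla b$ on that ball (which can be kept bounded, or even going to zero) together with the time of action; since $\Phi$ grows sublinearly-or-worse along no prescribed rate, one exploits that $\Phi(\|\nabla b_t\|_\infty \text{ on }B_{r_k})$ multiplied by $\leb^d(B_{r_k})=c_d r_k^d$ tends to $0$ as long as $r_k\downarrow 0$ fast enough, \emph{whatever} $\Phi$ is, because $\Phi$ is finite on $[0,\infty)$.

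Concretely, I would first fix a smooth divergence-free autonomous field $v$ on $\mathbb{R}^d$, supported in $B_1$, and a smooth compactly supported $w_0$, such that the time-$1$ flow $\Psi$ of $v$ satisfies $\|\nabla(w_0\circ\Psi^{-1})\|_{L^\infty}\ge 2\|\nabla w_0\|_{L^\infty}$; iterating, the time-$N$ flow gives amplification $\ge 2^N$, at the cost of $\|\nabla v\|_{L^\infty}$ staying fixed and the action time growing like $N$. Next, on disjoint balls $B_{r_k}(x_k)$ inside $B_{1/2}$ with $r_k\downarrow0$, I would place the rescaled field $b\big|_{[0,\infty)\times B_{r_k}(x_k)}(t,x):= a_k\, v\!\big((x-x_k)/r_k\big)$, active on a time window $[t_k,t_{k+1})$ of length $\delta_k$ chosen so that in that window the block is run $N_k\to\infty$ times, i.e. $a_k\delta_k/r_k\gtrsim N_k$. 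On $B_{r_k}(x_k)$, $|\nabla b_t|=\tfrac{a_k}{r_k}|\nabla v((x-x_k)/r_k)|\le C a_k/r_k$, so $\int_{B_{r_k}(x_k)}\Phi(|\nabla b_t|)\,dx\le \Phi(Ca_k/r_k)\,c_d r_k^d$; I choose $a_k/r_k$ bounded (say $=1$) so that $\Phi(Ca_k/r_k)$ is a fixed constant and $\sum_k r_k^d<\infty$ gives the Orlicz bound \eqref{zz2}. With $a_k=r_k$ one then needs $\delta_k\gtrsim N_k$, so the windows must be rescaled in time — one instead places the windows on a bounded time interval by also using that a longer physical time on a smaller ball is free; alternatively, since the statement only asks $\|\nabla u_t\|_{L^\infty}=\infty$ for all $t>0$, I would arrange the $k$-th block to be fully executed within $[0,\varepsilon_k)$ with $\varepsilon_k\downarrow0$, which forces $a_k/r_k\gtrsim N_k/\varepsilon_k\to\infty$; then $\Phi(Ca_k/r_k)$ is no longer bounded, but one still wins by taking $r_k$ so small that $\Phi(Ca_k/r_k)\,r_k^d\to 0$ — possible for any fixed $\Phi$ finite on $[0,\infty)$, just choosing $r_k$ after $a_k$. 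Finally, take $u_0\in C^1_c(B_{1/2})$ to restrict to a rescaled copy of $w_0$ on each $B_{r_k}(x_k)$; by the Lagrangian formula $u_t=u_0\circ(X_t)^{-1}$ and the disjointness of the balls, on $B_{r_k}(x_k)$ the solution at the end of the $k$-th window has $\|\nabla u_t\|_{L^\infty}\ge c\,2^{N_k}/r_k\to\infty$, and since each such window sits inside every interval $(0,t)$ once $k$ is large, we get $\|\nabla u_t\|_{L^\infty}=\infty$ for all $t>0$.

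The main obstacle is the simultaneous scheduling of the time windows and the choice of scales: one needs all the "mixing" events to accumulate at $t=0^+$ (so that $\|\nabla u_t\|_{L^\infty}=\infty$ holds for \emph{every} $t>0$, not just large $t$), which pushes $\|\nabla b\|_{L^\infty}$ on $B_{r_k}(x_k)$ to infinity, and then one must still defeat an \emph{arbitrary} growth function $\Phi$ in \eqref{zz2}. The resolution — and the point worth checking carefully — is the order of quantifiers: $\Phi$ is fixed first, then the block $v,w_0$ and the iteration counts $N_k$ and windows $\varepsilon_k$ are chosen, and only \emph{afterwards} is $r_k$ selected small enough that $\Phi\!\big(C a_k/r_k\big)\,c_d r_k^d\le 2^{-k}$; this is always possible because for fixed $a_k,\varepsilon_k,N_k$ the left side tends to $0$ as $r_k\to0$ (the polynomial $r_k^d$ beats the fixed value $\Phi$ of a diverging-but-finite argument once we also note $a_k$ itself may be taken $\to 0$, keeping $a_k/r_k$ moderate). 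One must also verify the elementary facts that $b$ so defined is globally divergence-free, smooth in $x$, measurable (indeed piecewise smooth) in $t$, and supported in $B_{1/2}$, and that the flow preserves each ball $B_{r_k}(x_k)$ so the Lagrangian formula localizes — all of which are routine given the disjoint-support construction.
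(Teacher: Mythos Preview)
Your overall strategy---disjoint rescaled copies of an Alberti--Crippa--Mazzucato building block, with the spatial scale chosen last to defeat the given $\Phi$---is the paper's strategy. But your write-up contains a genuine error at the decisive step, and the paper's execution is cleaner in two respects.

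The error is in your justification of the Orlicz bound. You assert that ``for fixed $a_k,\varepsilon_k,N_k$ the left side $\Phi(Ca_k/r_k)\,c_d r_k^d$ tends to $0$ as $r_k\to 0$''. This is false: with $\Phi(s)=e^s$ and any fixed $a_k>0$ one has $e^{Ca_k/r_k}r_k^d\to\infty$. What one must do---and what your parenthetical half-says, then immediately contradicts with ``keeping $a_k/r_k$ moderate''---is fix the \emph{ratio} $M_k:=a_k/r_k$ first (it is dictated by the mixing constraint $(a_k/r_k)\,\varepsilon_k\ge N_k$ anyway, and necessarily $M_k\to\infty$, not ``moderate''), so that $\Phi(CM_k)$ is a single finite number; only then pick $r_k$ small and set $a_k=M_k r_k$. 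The paper does exactly this: since $|\nabla v_n|\le c_0/\tau_n$ uniformly in $t$, it takes $\lambda_n = n^{-2}\,\Phi(c_0/\tau_n)^{-1/d}$, whence $\Phi(c_0/\tau_n)\,\lambda_n^d = n^{-2d}$ is summable.

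The paper also sidesteps your time-window scheduling entirely. It uses the block of \autoref{alberti,Crippa,Mazzucato}, which already satisfies $\norm{\nabla\rho_s}_{L^\infty}\gtrsim e^{cs}$, and runs every block for \emph{all} time with time scale $\tau_n=1/\log\log n\to 0$. Then for every fixed $t>0$,
\[
\norm{\nabla u_t}_{L^\infty}\ \ge\ \sup_n \frac{\gamma_n}{\lambda_n}\,\norm{\nabla\rho(t/\tau_n,\cdot)}_{L^\infty}\ \gtrsim\ \sup_n \frac{1}{\log\log n}\, e^{\,ct\log\log n}\ =\ \infty,
\]
with no windows and no iteration count to tune. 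A smaller slip in your plan: for $u_0\in C^1_c$ the amplitude $\gamma_k$ of the $k$-th copy must satisfy $\gamma_k/r_k\to 0$ (continuity of $\nabla u_0$ at accumulation points of the balls), so the correct lower bound on the $k$-th ball is $c\,(\gamma_k/r_k)\,2^{N_k}$, not $c\,2^{N_k}/r_k$; you then need $N_k$ large enough to beat $\gamma_k/r_k\to 0$, which is one more constraint to thread in your scheme but is automatic in the paper's exponential setup.
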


Let us spend a few words explaining the idea behind the construction of the examples in \autoref{th: example1}, \autoref{th: example 2} and \autoref{th: example3}. Basically, they are built following a common strategy that has been introduced for a first time in \cite{AlbertiCrippaMazzuccato14}, \cite{AlbertiCrippaMazzuccato18} and recently adopted in \cite{BrueNguyen18A}.
Following this scheme the construction of the vector field $b$ and the solution $u_t$ of \eqref{CE} is achieved by patching together a countable number of pairs $v_n$ and $\rho_n$ of velocity fields and solutions to \eqref{CE} with disjoint supports. They are obtained by rescaling in space, time and size $v$ and $\rho$, that are the fundamental building block provided by \autoref{alberti,Crippa,Mazzucato}. 
Choosing properly the scaling parameters we get the three different examples.

\begin{proposition}\label{alberti,Crippa,Mazzucato}
	 Assume $d\geq 2$ and let $Q$ be the open cube with unit side centered at the origin of $\mathbb{R}^d$. There exist a velocity field $v\in C^\infty([0,\infty)\times\mathbb{R}^d)$ and a solution $\rho\in  L^\infty([0,\infty)\times\mathbb{R}^d) $ of \eqref{CE} such that 
	\begin{itemize}
		\item[(i)]$v_t$ is bounded, divergence-free and compactly supported in $Q$ for any $t\ge 0$;
		\item[(ii)] $\rho_t$ has zero average and it is bounded and compactly supported in $Q$ for any $t\ge 0$;
		\item[(iii)] $\sup_{t\ge 0}\norm{v_t}_{W^{1,\infty}(\setR^d)}<\infty$ for any $t\ge 0$, for any $1\le p\le \infty$;
		\item[(iv)] there exists a constant $c>0$ such that
		\begin{equation}\label{z11}
		\norm{\rho_t}_{\dot W^{s,p}(\mathbb{R}^d)}\gtrsim \exp(cst),
		\qquad \forall t\ge 0,\quad s>0,\quad 1\leq p\leq \infty.
		\end{equation}
	\end{itemize}	
\end{proposition}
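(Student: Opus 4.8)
The plan is to take the self-similar mixing construction of Alberti, Crippa and Mazzucato \cite{AlbertiCrippaMazzuccato18} essentially off the shelf and record the few additional properties we need. Concretely, I would start from a single ``elementary mixing flow'' on the cube $Q$: a smooth, divergence-free, compactly supported (in $Q$) velocity field $w$ that, acting on a fixed smooth initial profile $\rho_0$ of zero average supported in $Q$, produces after one unit of time a pushforward that is geometrically finer — say, rescaled by a factor $1/2$ in one direction — up to placing finitely many disjoint rescaled copies. Iterating this on a sequence of nested time intervals $[n,n+1]$, and patching the rescaled copies (which have disjoint supports), one obtains a global-in-time velocity field $v\in C^\infty([0,\infty)\times\mathbb R^d)$ and a bounded solution $\rho$ of \eqref{CE}. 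Items (i) and (ii) are then immediate from the construction: divergence-freeness, boundedness and the support condition are preserved under the rescaling-and-patching operation, and the zero-average property of $\rho_t$ is preserved because the flow is measure preserving (being divergence free) and each building block starts with zero average.

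The crux is item (iv), the exponential-in-time growth of every homogeneous Sobolev norm $\norm{\rho_t}_{\dot W^{s,p}}$. Here I would argue as follows. At integer times $t=n$, the function $\rho_n$ is, by construction, a superposition of $\sim 2^{n}$ (or $c^n$ for a fixed $c>1$ depending on the elementary block) disjointly supported rescaled copies of a fixed profile, each copy compressed by a factor $2^{-n}$ in one variable. A scaling computation shows that compressing a fixed bump by a factor $\lambda$ in one direction multiplies its $\dot W^{s,p}$ seminorm by $\lambda^{-s}$ times a volume factor; summing the $p$-th powers over the $\sim 2^n$ disjoint copies and using disjointness of supports gives $\norm{\rho_n}_{\dot W^{s,p}}\gtrsim (2^{ns})\cdot(\text{const})$, i.e. exponential growth with rate $cs$. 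For non-integer $t$, one interpolates: between times $n$ and $n+1$ the flow is a smooth diffeomorphism with derivatives bounded uniformly in $n$ (item (iii), which itself follows because the rescaled blocks, living on disjoint regions, have uniformly bounded $W^{1,\infty}$ norm), so $\norm{\rho_t}_{\dot W^{s,p}}$ and $\norm{\rho_n}_{\dot W^{s,p}}$ are comparable up to a constant independent of $n$. Combining these gives \eqref{z11} for all $t\ge0$, $s>0$, $1\le p\le\infty$, with a constant $c$ depending only on $d$ and the choice of elementary block.

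The part I expect to require the most care is not any single estimate but the bookkeeping of the self-similar patching: one must choose the spatial rescaling factors, the number of copies per stage, and the time reparametrization on $[n,n+1]$ in a mutually consistent way so that (a) the velocity field remains smooth across the junction times $t=n$ (this forces a smooth cutoff in the time variable when switching between stages), (b) the supports of the copies stay disjoint and inside $Q$ at every stage, and (c) the compression factor accumulated after $n$ stages is exactly of order $2^{-n}$ so that the Sobolev norm grows like $2^{ns}$. All of this is carried out in \cite{AlbertiCrippaMazzuccato18}; for the present paper it suffices to cite that construction and verify that the zero-average normalization in (ii) and the uniform $W^{1,\infty}$ bound in (iii) — which are exactly what the later rescaling arguments in Section~\ref{section:Examples} consume — hold, which they do by the remarks above.
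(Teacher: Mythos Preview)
Your outline is a viable route, but it is not how the paper argues. The paper does not recompute the $\dot W^{s,p}$ norm of $\rho_t$ from the self-similar structure; instead it cites directly from \cite{AlbertiCrippaMazzuccato16} the \emph{negative} Sobolev (mixing) estimate
\[
\norm{\rho_t}_{W^{-s,q}(\setR^2)}\lesssim_{s,q}\exp(-cst)\qquad 1\le q\le 2,\ 0<s<2,
\]
and then obtains \eqref{z11} in one stroke by Gagliardo--Nirenberg interpolation: since $\norm{\rho_t}_{L^r}$ is bounded below (the flow is measure preserving), exponential decay of the negative norm forces exponential growth of the positive one. So the comparison is: you go through the geometry of the construction (scaling of disjoint copies at integer times, bi-Lipschitz invariance of $W^{s,p}$ between integer times), whereas the paper treats the ACM flow as a black box that mixes exponentially and lets interpolation do the work. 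Your approach is more self-contained but requires the bookkeeping you flag (anisotropic/isotropic scaling, uniform comparability across unit time intervals for \emph{fractional} $s$); the paper's approach is shorter and robust, but relies on having the $W^{-s,q}$ decay already recorded in \cite{AlbertiCrippaMazzuccato16}. Either way the substance is the same: (i)--(iii) come straight from the cited construction, and (iv) is the only thing to explain.
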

\begin{proof} 
	As remarked in \cite[Remark 10]{AlbertiCrippaMazzuccato18} we can assume $d=2$. In \cite{AlbertiCrippaMazzuccato16} the authors proved the existence of a velocity field $v\in C^\infty([0,\infty)\times\mathbb{R}^d)$ and a solution $\rho\in  L^\infty([0,\infty)\times\mathbb{R}^d) $ of \eqref{CE} satisfying (i), (ii), (iii) and 
	\begin{equation*}
	\norm{\rho_t}_{H^{-s}(\setR^2)}\leq C_s\exp(-sct),
	\qquad \text{for any}\ s\in (0,1).
	\end{equation*}	
     However, from \cite[page 33, proof of 6.4]{AlbertiCrippaMazzuccato16}, we also have 
		\begin{equation*}
	\norm{\rho_t}_{W^{-s,q}(\setR^2)}\lesssim_{s,q}\exp(-cst)
	\quad \text{for any }1\leq q\leq 2,\ \text{and}\ 0<s<2.
	\end{equation*}
	Therefore, thanks to Gagliardo–Nirenberg interpolation inequality (see \cite{AdamsFournier75}) we obtain \eqref{z11}. 	
\end{proof}

Before going into details with the proofs of \autoref{th: example1}, \autoref{th: example 2} and \autoref{th: example3} we present a technical ingredient.
\begin{lemma} \label{subesti}
	Let $\gamma\in (-\infty,1)$ be fixed. For every $n\in \setN$ consider an open set $\Omega_n$, a function $f_n\in L^p(\setR^d)$ and a parameter $0<\lambda_n<1/4$. Assume that the family $\set{\Omega_n}_{n\in \setN}$ is disjoint and that the distance between $\supp f_n$ and $\setR^d\setminus \Omega_n$ is bigger than $\lambda_n$ for every $n\in \setN$.
	Then it holds
	\begin{align}\nonumber
	\int_{\mathbb{R}^d} & \int_{\mathbb{R}^d}\frac{|\sum_n f_n(x+h)-\sum_n f_n(x)|^p}{|h|^{d+sp}}\di x\di h\\
	&\ge \limsup_{N\to\infty} \sum_{n=1}^N \left( \int_{\mathbb{R}^d}\int_{\mathbb{R}^d}\frac{|f_n(x+h)-f_n(x)|^p}{|h|^{d+sp}} \di x\di h-\frac{c(d)2^{p}}{sp} \left(\frac{2}{\lambda_n}\right)^{sp}\norm{f_n}_{L^p}^p\right).
	\label{essumf}
	\end{align}
\end{lemma}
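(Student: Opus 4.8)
The goal is to bound the Gagliardo-type double integral of the sum $\sum_n f_n$ from below by the sum of the individual Gagliardo integrals, up to an error controlled by $\norm{f_n}_{L^p}^p$ and $\lambda_n^{-sp}$. The plan is to split the integration in $h$ into the regime $|h| \le \lambda_n$ (where the supports do not interact, so one gets the main term cleanly) and the regime $|h| > \lambda_n$ (where one simply discards information but must be careful not to lose too much). First I would fix $N$ and restrict attention to the finitely many indices $n = 1, \dots, N$; since the $\Omega_n$ are pairwise disjoint and $\supp f_n$ is at distance $> \lambda_n$ from $\setR^d \setminus \Omega_n$, for any $h$ with $|h| \le \lambda_n$ the translate $\supp f_n - h$ is still contained in $\Omega_n$, so on $\Omega_n$ the difference $\sum_m f_m(x+h) - \sum_m f_m(x)$ equals exactly $f_n(x+h) - f_n(x)$, and these contributions over distinct $n$ live on disjoint sets. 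Hence
\begin{equation*}
\int_{\setR^d}\int_{\setR^d}\frac{|\sum_n f_n(x+h)-\sum_n f_n(x)|^p}{|h|^{d+sp}}\di x\di h
\ge \sum_{n=1}^N \int_{|h|\le \lambda_n}\int_{\setR^d}\frac{|f_n(x+h)-f_n(x)|^p}{|h|^{d+sp}}\di x\di h.
\end{equation*}

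Next I would, for each $n$, add and subtract the missing piece $|h| > \lambda_n$ to recover the full Gagliardo integral of $f_n$:
\begin{equation*}
\int_{|h|\le \lambda_n}\int_{\setR^d}\frac{|f_n(x+h)-f_n(x)|^p}{|h|^{d+sp}}\di x\di h
= \int_{\setR^d}\int_{\setR^d}\frac{|f_n(x+h)-f_n(x)|^p}{|h|^{d+sp}}\di x\di h
- \int_{|h|> \lambda_n}\int_{\setR^d}\frac{|f_n(x+h)-f_n(x)|^p}{|h|^{d+sp}}\di x\di h.
\end{equation*}
The tail term is then estimated crudely: by the elementary inequality $|a-b|^p \le 2^{p-1}(|a|^p+|b|^p)$ and translation invariance of Lebesgue measure, the inner $x$-integral is $\le 2^p \norm{f_n}_{L^p}^p$, and integrating $|h|^{-d-sp}$ over $\{|h| > \lambda_n\}$ in polar coordinates gives $c(d)\, (sp)^{-1}\lambda_n^{-sp}$ (here $s>0$ is implicit from the hypothesis, as $\gamma < 1$ is used only through the fact that the kernel is integrable at infinity; in fact the statement uses $sp$ playing the role of a positive exponent). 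Combining, the tail is bounded by $c(d)\,2^p (sp)^{-1}(2/\lambda_n)^{sp}\norm{f_n}_{L^p}^p$ — matching the claimed error, with the factor $2$ absorbing constants. Taking $\limsup_{N\to\infty}$ over the partial sums yields \eqref{essumf}.

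I do not expect any serious obstacle here; the only point requiring care is the \emph{disjointness of supports after translation}, namely verifying that for $|h|\le\lambda_n$ the function $x\mapsto f_n(x+h)-f_n(x)$ is supported inside $\Omega_n$ and that the $\Omega_n$ being disjoint makes the cross terms in $|\sum_n(f_n(x+h)-f_n(x))|^p$ vanish pointwise — this is exactly where the hypothesis on the distance between $\supp f_n$ and $\setR^d\setminus\Omega_n$ is used, and one should write it out explicitly rather than gesture at it. The role of $\gamma\in(-\infty,1)$ appears to be a red herring for this particular lemma or is used elsewhere; within the proof itself all that matters is that $s>0$ so the tail integral $\int_{|h|>\lambda_n}|h|^{-d-sp}\di h$ converges, which I would note explicitly. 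Everything else is a routine change of variables and the triangle inequality.
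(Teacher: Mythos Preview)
Your proposal is correct and follows essentially the same route as the paper: split the $h$-integration at a threshold comparable to $\lambda_n$, use the separation hypothesis to see that on the near part the sum decouples into disjointly supported pieces, and bound the far tail by $2^p\norm{f_n}_{L^p}^p\int_{|h|>\lambda_n}|h|^{-d-sp}\di h$. The paper cuts at $\lambda_n/2$ and introduces an auxiliary set $\bar\Omega_n$ rather than arguing directly via the support of $x\mapsto f_n(x+h)-f_n(x)$, but this is cosmetic; your observation that $\gamma$ plays no role in the argument is also correct.
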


\begin{proof} 
	Let us call $\bar \Omega_n\subset \Omega_n$ the set of $x\in \setR^d$ whose distance from $\supp f_n$ is smaller than $\lambda_n/2$. 
	Observe that
	\begin{align*} 	\int_{\mathbb{R}^d} & \int_{\mathbb{R}^d}\frac{|\sum_n f_n(x+h)-\sum_n f_n(x)|^p}{|h|^{d+sp}}\di x\di h\\
	\geq & \limsup_{N\to\infty} \sum_{n=1}^N \int_{B_{\lambda_n/2}}\int_{\bar \Omega_n}\frac{| f_n(x+h)- f_n(x)|^p}{|h|^{d+sp}}\di x\di h\\
	=& \limsup_{N\to\infty} \sum_{n=1}^N\Big( \int_{\setR^d}\int_{\setR^d}\frac{|f_n(x+h)-f_n(x)|^p}{|h|^{d+sp}}\di x\di h\\
	&\qquad\qquad\qquad-\int_{\mathbb{R}^d\setminus B_{\lambda_n/2}}\int_{\mathbb{R}^d}\frac{|f_n(x+h)-f_n(x)|^p}{|h|^{d+sp}} \di x\di h\Big).
	\end{align*}
	On the other hand
	\begin{align*}
	&\int_{\mathbb{R}^d\setminus B_{\lambda_n/2}}\int_{\mathbb{R}^d}\frac{|f_n(x+h)-f_n(x)|^p}{|h|^{d+sp}} \di x\di h
	\\
	&\le 2^{p}\norm{f_n}_{L^p}^p\int_{\mathbb{R}^d\setminus B_{\lambda_n/2}}\frac{1}{|h|^{d+sp}} \di h \le \frac{c(d)2^{p}}{sp} \left(\frac{2}{\lambda_n}\right)^{sp}\norm{f_n}_{L^p}^p.
	\end{align*}
	Combining these inequalities we get the sought conclusion.
\end{proof}

\begin{remark} The inequality \eqref{essumf} in the case $p=2$ was proven in \cite{AlbertiCrippaMazzuccato18}.
\end{remark}

Let us start with the construction of our examples. Let $p\ge 1$ be fixed.
We consider $v$ and $\rho$ as in \autoref{alberti,Crippa,Mazzucato}, and a family of disjoint open cubes $\set{Q_n}_{n\in \setN}$ contained in $B_{1/4}$. 
Assuming that the cube $Q_n$ has side of length $3\lambda_n$ and center at $x_n\in B_{1/4}$, we set 
\begin{align*}
	v_n(t,x):=\frac{\lambda_n}{\tau_n}v\left(\frac{t}{\tau_n},\frac{x-x_n}{\lambda_n}\right),
	\qquad
	\rho_n(t,x):=\gamma_n\rho\left(\frac{t}{\tau_n},\frac{x-x_n}{\lambda_n}\right),
\end{align*}
for every $x\in \setR^d$, $t\ge 0$ and $n\in \setN$ where $\gamma_n,\tau_n,\lambda_n\in (0,1/10)$ convergence to $0$ and 	\begin{equation*}
	\sup_n\lambda_n/\tau_n<\infty.
\end{equation*}
Observe that $u_n$ is supported in $Q_n$ and $\text{dist}(\supp u_n, \setR^d\setminus Q_n)\geq \lambda_n$ for every $n\in \setN$.
Let us set
\begin{equation*}
	b(t,x):=\sum_n v_n(t,x),\qquad u(t,x):=\sum_n \rho_n(t,x)
	\quad \qquad \forall x\in \setR^d,\ \quad\forall t>0.
\end{equation*}
Note that $\supp b_t\subset B_{1/2}$ and $\supp u_t\subset B_{1/2}$.
It remains to choose properly the parameters $\lambda_n$, $\gamma_n$, $\tau_n$ in order to get our three examples.

\begin{proof}[Proof of \autoref{th: example1}]
     We  choose
	\begin{equation*}
	\gamma_n=\exp\left\lbrace-\log(n)\log\log\log\log(n)\right\rbrace,\
	\tau_n=\frac{1}{\log(n)\log\log\log(n)},\ \lambda_n=\frac{1}{n^{1/d}\exp\left\lbrace\frac{\log(n)}{\log\log\log(n)}\right\rbrace}
	\end{equation*}
	for $n\in \setN$ big enough\footnote{It suffices that $\log\log\log\log(n)\geq 1$, namely $n\geq \exp\exp\exp\exp(1)$}. It is easily seen that $\sup_n\gamma_n\lambda_n^{-m}+\gamma_n<\infty$ for any $m$, it implies $u_0\in C_c^\infty(B_{1/2})$ and $u\in L^\infty((0,\infty)\times \setR^d)$.
	Let us check \eqref{ine1}.
	\begin{align*}
	\int_{B_1(0)}&\left(\exp\left\lbrace\frac{|\nabla b(t,x)|}{\log(1+|\nabla b(t,x)|)^a}\right\rbrace-1 \right)\di x\\
	&=\sum_n \int_{B_1(0)}\left(\exp\left\lbrace\frac{|\nabla v_n(t,x)|}{\log(1+|\nabla v_n(t,x)|)^a}\right\rbrace-1 \right)\di x\\
	&=\sum_n \int_{B_1(0)}\left(\exp\left\lbrace \frac{\frac{1}{\tau_n}|\nabla v(t/\tau_n,x)|}{\log(1+\frac{1}{\tau_n}|\nabla v(t/\tau_n,x)|)^a}\right\rbrace-1 \right\rbrace\lambda_n^d  \di x\\
	&\leq C\sum_{n}\left(\exp\left\lbrace\frac{\frac{1}{\tau_n}C}{\log(1+\frac{C}{\tau_n})^a}\right\rbrace-1 \right)\lambda_n^d\\
	&\leq \sum_{n}C\exp\left\lbrace C'|\log(\tau_n)|^{-a}\tau_n^{-1}\right\rbrace\lambda_n^d
	\\ &\leq C \sum_{n}\exp\left\lbrace C''\frac{\log(n)\log\log\log(n)}{(\log\log(n))^a}\right\rbrace\frac{1}{n\exp\left\lbrace \frac{d\log(n)}{\log\log\log(n)}\right\rbrace},
	\end{align*}
	where $C$, $C'$ and $C''$ are positive constants. Observe that for $n_0\in \setN$ large enough, we have
	\begin{equation*}
	C''\frac{\log\log\log(n)}{(\log\log(n))^a}\leq \frac{d}{2}\frac{1}{\log\log\log(n)}
	\qquad
	\forall n\ge n_0.
	\end{equation*}
	Thus, 
	\begin{align*}
	\int_{B_1(0)}\left(\exp\left\lbrace \frac{|\nabla b(t,x)|}{\log(1+|\nabla b(t,x)|)^a}\right\rbrace-1 \right)dx\leq C \sum_{n}\frac{1}{n\exp\left\lbrace\frac{d \log(n)}{2\log\log\log(n)}\right\rbrace}\lesssim\sum_n\frac{1}{n^2}<\infty.
	\end{align*}
	Let us eventually verify
    \begin{equation*}
    [u_t]_{W^{s,p}}=\infty
    \qquad
    \forall t>0, \ \forall s>0, \ \forall p\ge 1.
    \end{equation*}
	An application of \autoref{subesti} leads to
	\begin{align*}
	[u_t]_{W^{s,p}}^{p}&\geq \limsup_{N\to\infty} \sum_{n=1}^N \left([\rho_n(t,\cdot)]_{W^{s,p}}^p-\frac{c(d)2^{p}}{sp} \left(\frac{2}{\lambda_n}\right)^{sp}\norm{\rho_n(t,\cdot )}_{L^p}^p\right)\\
	&\geq 
	\limsup_{N\to\infty} \sum_{n=1}^N \gamma_n^p\lambda_n^{d-sp}\left([\rho(t/\tau_n,\cdot)]_{W^{s,p}}^p-C(s,p,d)\norm{\rho(t/\tau_n,\cdot )}_{L^p}^p\right)\\
	&\geq 
	\limsup_{N\to\infty} \sum_{n=1}^N \gamma_n^p\lambda_n^{d-sp}\left( C\exp\left\lbrace cpst/\tau_n\right\rbrace-C(s,p,d)\norm{\rho_0}_{L^p}^p\right),
	\end{align*}
	where in the last passage we used \autoref{alberti,Crippa,Mazzucato}(iv).
	Now observe that
	\begin{align*}
	\gamma_n^{p}& \lambda_n^{d-sp}\exp\left\lbrace\frac{cpst}{2\tau_n}\right\rbrace\\
	&=\frac{1}{n^{1-sp/d}}\exp\left\lbrace \log(n)\left(\frac{cpst}{2}\log\log\log(n)-p\log\log\log\log(n)-\frac{(d-sp)}{\log\log\log(n)}\right)\right\rbrace\\
	&\geq \frac{1}{n^{1-sp/d}}\exp\left\lbrace 2\log(n)\right\rbrace
	=n^{1+sp/d}\geq 1,
	\end{align*}
	for any $n$ large enough.
	Moreover $\sum_{n=1}^\infty \gamma_n^p\lambda_n^{d-sp}<\infty$, so 
	\begin{equation*}
	[u_t]_{W^{s,p}}^p\geq 
	C'\sum_{n\geq n_0}^\infty \exp\left\lbrace\frac{cpst}{2\tau_n}\right\rbrace-C''=\infty.
	\end{equation*}
	The proof is complete.
\end{proof}
Let us now pass to the second example.
	
\begin{proof}[Proof of \autoref{th: example 2}]
   For any $m\in \setN$ positive and any $\lambda>0$,  we  choose
	\begin{equation*}
	\tau_n=\frac{2c_0\lambda}{\log(n)\log\log\log(n)},\quad 
	\lambda_n=\frac{1}{\exp\left\lbrace \log(n)\log\log\log(n)\right\rbrace },\quad
	\gamma_n=\lambda_n^m,
	\end{equation*}
	for $n\geq 10^{100}$, where $c_0:=\norm{v}_{L^\infty}$. 
	Since $\sup_n\gamma_n\lambda_n^{-m}+\gamma_n<\infty$, we have $u_0\in C_c^m(B_{1/2})$ and $u\in L^\infty((0,\infty)\times\setR^d)$. 
	Let us check \eqref{zz1}. Using the identity $\lambda_n=\exp(-2c_0\lambda/\tau_n)$ we get 
	\begin{align*}
	\int_{B_1}  \left(\exp\left\lbrace \lambda|\nabla b_t(x)|\right\rbrace-1 \right)\di x &=\sum_n \int_{B_1}\left(\exp\left\lbrace \lambda |\nabla v_n(t,x)|\right\rbrace-1 \right) \di x\\
	\\&\leq C\sum_{n}\exp\left\lbrace c_0\lambda/\tau_n\right\rbrace\lambda_n^d\\
	&= C\sum_{n}\lambda_n^{d-\frac{1}{2}}<\infty.
	\end{align*}
	Let us now prove (i).
	\begin{equation*}
	\norm{u_t}_{W^{1,1}} =\sum_{n}\norm{\rho_n(t,\cdot )}_{W^{1,1}}
	=\sum_{n}\gamma_n\lambda_n^{d-1}\norm{\rho(t/\tau_n,\cdot )}_{W^{1,1}}
	\geq C \sum_n \gamma_n\lambda_n^{d-1}\exp\left\lbrace ct/\tau_n\right\rbrace
	\end{equation*}
	where in the last line we used \autoref{alberti,Crippa,Mazzucato}(iv).
	Now observe that
	\begin{equation*}
		 C \sum_n \gamma_n\lambda_n^{d-1}\exp\left\lbrace ct/\tau_n\right\rbrace
		\ge  C \sum_{n}\exp\left\lbrace\left(-2c_0\lambda(d+m-1)+ct\right)/\tau_n\right\rbrace=+\infty
	\end{equation*}
	provided $-2c_0\lambda(d+m-1)+ct\geq 0$, that is to say
	\begin{equation*}
	t\geq c_1\lambda
	\qquad
	\text{with}
	\quad
	c_1=2c_0(d+m-1)/c.
	\end{equation*}
	Let us finally prove (ii). For any $0<s<1$ we have
	\begin{align*}
	[u_t]_{W^{s,1}} & \geq \limsup_{N\to\infty} \sum_{n=1}^N \left(\norm{\rho_n(t,\cdot )}_{W^{s,1}}-\frac{c(d)2}{s} \left(\frac{2}{\lambda_n}\right)^{s}\norm{\rho_n(t,\cdot )}_{L^1}\right)\\
	&\geq 
	\limsup_{N\to\infty} \sum_{n=1}^N \gamma_n\lambda_n^{d-s}\left(\norm{\rho(t/\tau_n,\cdot )}_{W^{s,1}}-C(s,d)\norm{\rho(t/\tau_n,\cdot )}_{L^1}\right)
	\\
	&\geq 
	C \sum_{n=1}^\infty \gamma_n\lambda_n^{d-s}\exp\left\lbrace cst/\tau_n\right\rbrace-C'
	\\&= 
	C\sum_{n}\exp\left\lbrace\left(-2c_0\lambda(d+m-s)+cst\right)/\tau_n\right\rbrace-C',
	\end{align*}
	where we used $\sum_{n=1}^\infty \gamma_n\lambda_n^{d-s}<\infty$.
	From the previous estimate we deduce
	\begin{equation*}
	[u_t]_{W^{s,1}}=\infty
	\qquad
	\text{provided}
	\quad
	-2c_0\lambda(d+m-s)+cst\geq 0,
	\end{equation*}
	that implies our conclusion with $c_2=2c_0(d-1+m)/c$.
\end{proof}
\begin{proof}[Proof of \autoref{th: example3}]
Let us choose
	\begin{equation}
	\tau_n=\frac{1}{\log\log(n)},
	\qquad
	\lambda_n=\frac{1}{n^2\Phi(c_0\log\log(n))^{1/d}},
	\qquad
	\gamma_n=\frac{\lambda_n}{\log\log(n)}
	\qquad\text{for}\ n\geq 10^{100},
	\end{equation}
	where $c_0=\norm{\nabla v}_{L^\infty}$.
	First of all let us observe that $u_0\in C^1_c(B_{1/2})$, since $\gamma_n/\lambda_n=1/\log\log(n)$. In order to check \eqref{zz2},  we estimate
	\begin{equation*}
	\int_{\mathbb{R}^d}\Phi(|\nabla b_t(x)|)\di x
	=\sum_n \lambda^d_n \int_{B_2}\Phi(|\nabla v(t/\tau_n,x)|/\tau_n)\di x
	 = C\sum_{n}\frac{1}{n^{2d}}<\infty.
	\end{equation*}
	Moreover, using \autoref{alberti,Crippa,Mazzucato}(ii) we have
	\begin{align*}
	\norm{\nabla u_t}_{L^\infty}
	&\geq \sup_n \frac{\gamma_n}{\lambda_n} \norm{\nabla \rho(t/\tau_n, \cdot )}_{L^\infty}\\
	&\geq C \sup_n \frac{\gamma_n}{\lambda_n} \exp\left\lbrace ct/\tau_n\right\rbrace\\
	&=\sup_n\frac{C}{\log\log n}\exp\left\lbrace ct\log\log(n)\right\rbrace =\infty.
	\end{align*}
	The proof is complete.
\end{proof}

\section{Application to the 2D Euler equation}\label{section:Euler}
In this section we present an application of \autoref{th: Main 1} to the study of the 2D Euler equation with bounded initial vorticity in the class $L^{\infty}\cap W^{\alpha,p}$. We prove a propagation of regularity result that generalizes \cite[Corollary 1.1]{BahouriChemin94}.

Let us start by introducing the Cauchy problem associated to the 2D Euler equation in vorticity formulation. Here we set the problem in the 2 dimensional torus:
\begin{equation}\label{EU}
	\begin{dcases}
	\partial_t \omega_t+\div(b_t \omega_t)=0,\\
	b_t=K\ast \omega_t,\\
	\omega_0=\bar \omega,
	\end{dcases}\tag{E}
\end{equation}
where $\bar \omega$ is the initial data and $K$ is the Biot-Savart kernel. 

In this section, we consider only solutions of class $L^{\infty}([0,T]\times\T^2)\cap C([0,T];L^1(\T^2))$ satisfying the weak formulation
\begin{equation}\label{euler weak}
	\int_{\T^2} \omega_t \phi \di x-\int_{\T^2} \omega_0 \phi \di x=\int_0^t\int \omega_s\ b_s\cdot \nabla \phi \di x\di s
	\qquad
	\forall \phi\in C^{\infty}(\T^2).
\end{equation}
It is well-known since the work \cite{Yudovich63} that in this class \eqref{EU} admits a unique solution (in the sense of \eqref{euler weak}). We refer to \cite{BertozziMajda02,Chemin98,Lions96,MarchioroPulvirenti} for a detailed description of the classical theory for the 2D Euler equation.

Let us state the main result of this section.
\begin{theorem}\label{th:main2} 
	Let $0<\alpha\le 1$ and $p\ge 1$ be fixed. Consider a weak solution of \eqref{EU} $\omega\in L^{\infty}([0,T]\times\T^2)\cap C([0,T];L^1(\T^2))$. The following hold true:
	\begin{itemize}
		\item[(i)] if $\omega_0\in W^{\alpha,p}(\T^2)$ then
		\begin{equation}\label{euler1}
		\omega_t\in W^{\alpha_t,p}(\T^2)
		\qquad
		\text{with}
		\quad
		\alpha_t:=\alpha\  \frac{1}{1+C\norm{\omega_0}_{L^{\infty}}\alpha p t}
		\qquad
		\forall t\in [0,T],
		\end{equation}
		where $C>0$ is a universal constant;
		\item[(ii)] if $\omega_0\in C(\T^2)\cap W^{\alpha,p}(\T^2)$ with $p>1$ then
		\begin{equation}\label{euler2}
		\omega_t\in W^{\alpha',p}(\T^2)
		\qquad
		\text{for any}\ 0<\alpha'<\alpha, \quad \forall t\in [0,T].
		\end{equation}
		When $\alpha=1$ we also have
		\begin{equation}\label{euler2.5}
		\omega_t\in W^{1,p'}(\T^2)
		\qquad
		\text{for any}\ 1\le p'<p, \quad \forall\ t\in [0,T];
		\end{equation}
		\item[(iii)] If $\omega_0\in W^{\alpha,p}(\T^2)$ with $p>2/\alpha$ it holds $\omega_t\in  W^{\alpha,p}(\T^2)$ for any $t\in [0,T]$.
	\end{itemize}	
\end{theorem}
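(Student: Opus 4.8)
The plan is to reduce \autoref{th:main2} to the linear transport theory of \autoref{section:Regularity}. The starting observation is that, for any weak solution $\omega\in L^{\infty}([0,T]\times\T^2)\cap C([0,T];L^1(\T^2))$ of \eqref{EU}, the Biot--Savart velocity $b_t=K*\omega_t$ is divergence free ($K$ being of the form $\nabla^{\perp}G$), and the Calder\'on--Zygmund estimate $\|\nabla b_t\|_{L^{q}(\T^2)}\lesssim q\,\|\omega_t\|_{L^{\infty}}$ (valid for every $q\ge 2$), after expanding the exponential in a power series, yields $\sup_{t\in[0,T]}\int_{\T^2}\exp\{\beta|\nabla b_t|\}\di x<\infty$ for some $\beta\simeq\|\omega_0\|_{L^{\infty}}^{-1}$. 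Hence $b$ satisfies \eqref{Hpp} on $[0,T]$ with $L=0$; since $\div b=0$ and $\omega_0\in L^{\infty}(\T^2)$ (because $\omega\in L^{\infty}([0,T]\times\T^2)$), $\omega$ is the unique solution of \eqref{CE} with drift $b$, and by \autoref{remark:existence and uniqueness} it is Lagrangian: $\omega_t=\omega_0\circ(X_t)^{-1}$ with $X$ the classical flow of $b$. In particular $\|\omega_t\|_{L^{\infty}}\equiv\|\omega_0\|_{L^{\infty}}$, which makes the choice of $\beta$ consistent.

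Then \textbf{(i)} is immediate from \eqref{conclusion 1} of \autoref{th: Main 1} with $L=0$ and $\beta\simeq\|\omega_0\|_{L^{\infty}}^{-1}$: the exponent $\alpha(1+2\beta^{-1}\alpha pC_1t)^{-1}$ becomes $\alpha(1+C\|\omega_0\|_{L^{\infty}}\alpha pt)^{-1}$ once the dimensional constants are absorbed. For \textbf{(iii)} I would instead run the classical H\"older bootstrap. Since $p>2/\alpha$, Morrey's embedding gives $\omega_0\in C^{0,\gamma_0}(\T^2)$, $\gamma_0=\alpha-2/p>0$; as $b$ is log-Lipschitz with constant $\lesssim\|\omega_0\|_{L^{\infty}}$, the flow and its inverse are H\"older on $[0,T]$ with a strictly positive exponent (see \eqref{eq: Holder flow}), so $\omega_t=\omega_0\circ(X_t)^{-1}\in C^{0,\gamma_t}(\T^2)$ with $\gamma_t>0$ and $[\omega_t]_{C^{0,\gamma_t}}$ bounded on $[0,T]$. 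Re-inserting this H\"older bound into the Biot--Savart law, the Calder\'on--Zygmund/Schauder estimate for $\nabla K$ gives $\|\nabla b_t\|_{L^{\infty}}<\infty$ uniformly on $[0,T]$, hence $X_t,(X_t)^{-1}$ are bi-Lipschitz homeomorphisms of $\T^2$ (with constants $\le\exp(\int_0^t\|\nabla b_s\|_{L^{\infty}}\di s)$). One concludes using that composition with a bi-Lipschitz homeomorphism is bounded on $W^{\alpha,p}(\T^2)$ --- for $\alpha\in(0,1)$ by the change of variables $y=\psi(x)$ directly in the Gagliardo double integral (bounded Jacobian, $|x-y|\simeq|\psi(x)-\psi(y)|$), for $\alpha=1$ by the chain rule and boundedness of the Jacobian --- so $\omega_t\in W^{\alpha,p}(\T^2)$ for all $t\in[0,T]$.

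Part \textbf{(ii)} I would split at the Morrey threshold $\alpha p=2$. If $\alpha p>2$ it follows from \textbf{(iii)}: pick $\alpha'<\alpha''<\alpha$ with $\alpha''>2/p$ and apply \textbf{(iii)} with exponent $\alpha''$ (since $\omega_0\in W^{\alpha,p}\subset W^{\alpha'',p}$ and $p>2/\alpha''$), obtaining $\omega_t\in W^{\alpha'',p}\subset W^{\alpha',p}$; with $\alpha=1$, $p>2$ this also gives $\omega_t\in W^{1,p}$ and hence \eqref{euler2.5}. The delicate regime is $\alpha p\le 2$ with $p>1$, where $\omega_0$ need not be H\"older, $b$ need not be Lipschitz, and the assumption $\omega_0\in C(\T^2)$ must be used essentially (beyond the mere $L^{\infty}$ bound). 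The scheme I would follow is an iteration over a fine partition $0=t_0<\dots<t_N=t$: on each slice apply \eqref{conclusion 2}/\eqref{conclusion 3} of \autoref{th: Main 1} to the transport equation --- admissible because $\|\omega_t\|_{L^{\infty}}$ is conserved, so \eqref{Hpp} holds on every slice with the \emph{same} $\beta$, and because by \eqref{Z1} the smoothness index is preserved (only the integrability drops) --- losing an arbitrarily small amount of smoothness per slice, then restore the integrability by the Sobolev embedding \eqref{eq:Sobolev embedding}, the continuity of the (still continuous) solution at each restart keeping one in an admissible class and controlling the error terms. \textbf{The main obstacle} is exactly the quantitative bookkeeping of this iteration: showing that, refining the partition, the total loss of smoothness stays below $\alpha-\alpha'$ for an arbitrary prescribed $\alpha'<\alpha$ (respectively, that one returns to integrability $p'$ for every $p'<p$ when $\alpha=1$). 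An alternative route is to derive \textbf{(ii)} from \textbf{(iii)} by approximating $\omega_0$ by H\"older data in $W^{\alpha,p}$ and bounding the $W^{\alpha',p}$-norms of the corresponding solutions uniformly via the compact embedding $W^{\alpha,p}\hookrightarrow\hookrightarrow W^{\alpha',p}$ --- the snag being the blow-up of the bi-Lipschitz constants of the approximate flows.
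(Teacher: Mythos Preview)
Your treatment of (i) is correct and matches the paper. Your argument for (iii) is also sound --- it is essentially the direct H\"older/Schauder route noted in the Remark following the theorem; the paper instead derives (iii) from (ii) (Morrey gives $\omega_0\in C(\T^2)$, then (ii) yields $\omega_t\in W^{\alpha',p}$ for all $\alpha'<\alpha$, hence $\nabla b_t\in W^{\alpha',p}\hookrightarrow L^\infty$ once $\alpha'p>2$), but the two routes are equivalent.

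The genuine gap is in (ii), and it is precisely the step you flag as the ``main obstacle''. Your time-slicing iteration cannot close: with only $\omega_t\in L^\infty$ the exponential integrability of $\nabla b_t$ holds for a \emph{fixed} $\beta\simeq\|\omega_0\|_{L^\infty}^{-1}$, so on a slice of length $\Delta t$ Theorem~\ref{th: MMain} drops the integrability from $p$ to $p_{\Delta t}$ with $\tfrac{1}{p_{\Delta t}}-\tfrac{1}{p}=\beta^{-1}\alpha C_1\Delta t$, and the Sobolev re-embedding \eqref{eq:Sobolev embedding} to recover exponent $p$ costs $\alpha\mapsto\alpha(1-2\beta^{-1}C_1\Delta t)$. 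After $N$ slices covering $[0,T]$ one gets $\alpha_N=\alpha\bigl(1-2\beta^{-1}C_1T/N\bigr)^N\to\alpha\,e^{-2\beta^{-1}C_1T}$, a \emph{fixed} loss independent of the mesh --- exactly the Bahouri--Chemin exponential decay, not the arbitrarily small loss claimed in (ii). Nowhere in this scheme is the continuity of $\omega_0$ used in a way that improves $\beta$.

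What you are missing is the mechanism by which continuity enters. The paper's \autoref{lemma:VMOestimate} shows that if $f\in C(\T^2)$ then $\nabla(K*f)\in{\rm VMO}$, and consequently $\int_{\T^2}\exp\{\beta|\nabla(K*f)|\}\di x<\infty$ for \emph{every} $\beta>0$ (split $f=\bar f+(f-\bar f)$ with $\bar f$ smooth and $\|f-\bar f\|_{L^\infty}\le C/\beta$, then apply \eqref{eq:BMOestimate} to the small remainder). Since the flow and its inverse are H\"older, $\omega_t=\omega_0\circ(X_t)^{-1}$ stays continuous with a modulus controlled uniformly on $[0,T]$, so $\sup_{t\in[0,T]}\int_{\T^2}\exp\{\beta|\nabla b_t|\}\di x<\infty$ for \emph{all} $\beta>0$. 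At that point \autoref{cor: main1} gives (ii) in a single application, with no iteration and no bookkeeping.
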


\begin{remark}
	The conclusion (iii) in \autoref{th:main2} can be also obtained using the H\"older theory for the Euler equation (see for instance \cite{BertozziMajda02}). Indeed, the Sobolev embedding gives $W^{\alpha,p}(\T^2)\subset C^{1-\frac{2}{p\alpha}}(\T^2)$  for $\alpha p>2$ that implies in turn $b\in L^1([0,T];C^{2-\frac{2}{p\alpha}}(\T^2))$ and the classical Cauchy-Lipschitz theory can be applied.
\end{remark}

Before proving \autoref{th:main2} let us recall the main properties of the Biot-Savart kernel $K$ in \eqref{EU}. In the whole space $\setR^2$ it can be explicitly written as
\begin{equation*}
	K(y):=\frac{1}{2\pi }\frac{y^{\perp}}{|y|^2},
\end{equation*}
while in our periodic setting has a more complicated form \footnote{For any function $f$ in $\T^2$, it holds
\begin{equation*}
K\ast f(x)=\sum_{n\in \mathbb{Z}}\int_{[0,1]^2}	K(x-y+n)f(y)\di y.
\end{equation*}} but still satisfies the following properties:
\begin{itemize}
	\item[(i)] $K\in L^1(\T^2;\setR^2)$;
	
	\item[(ii)] $\nabla K: \T^2\to \setR^{2\times 2}$ is a vector valued Calderon-Zygmund kernel, in particular there exists a constant $C>1$ such that
	\begin{equation}\label{eq:BMOestimate}
		\int_{\T^2} \exp\left\lbrace \frac{|\nabla (K\ast f)|}{C\norm{f}_{L^{\infty}}} \right\rbrace \di x \le C
		\qquad
		\forall f\in L^{\infty}(\T^2).
	\end{equation}
\end{itemize}
\eqref{eq:BMOestimate} follows from the fact that Calderon-Zygmund operators map $L^{\infty}$ in $\text{BMO}$ (see \cite{Stein}) and from the exponential integrability of $\text{BMO}$ functions (see \cite{Nieremberg61}).
We refer to \cite{Schochet96} for a detailed analysis of the Biot-Savart Kernel $K$ in the periodic setting.

The next lemma shows that \eqref{eq:BMOestimate} can be slightly improved when $f\in C(\T^2)$. It is basically a consequence of the fact that Calderon-Zygmund operator map $C(\T^2)$ to $\text{VMO}(\T^2)$ (the space of vanishing mean oscillation functions), see \cite[page 180]{Stein2}. 
\begin{lemma}\label{lemma:VMOestimate}
	Let $K$ as above. Then for any $f\in C(\T^2)$ and every $\beta>0$ it holds
	\begin{equation}\label{z20}
		\int_{\T^2} \exp\set{\beta |\nabla (K\ast f)|}\di x<C(\beta,\norm{K}_{L^1},\norm{f}_{L^{\infty}},\rho_f^{-1}(C/\beta)),\footnote{\text{This constant blows up when $\beta\to \infty$}}
	\end{equation}
	where $\rho_f (r):=\sup\set{|f(x)-f(y)|:\ x,y\in \T^2\ \text{with}\ \dist(x,y)\le r}$ is the modulus of continuity of $f$.
\end{lemma}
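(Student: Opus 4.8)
The plan is to prove Lemma \ref{lemma:VMOestimate} by decomposing $f$ into a smooth piece and a small piece, controlling the contribution of each to $\nabla(K\ast f)$ separately: the smooth piece gives a bounded gradient (hence trivially exponentially integrable), while the small piece is uniformly bounded with small $L^\infty$ norm, so the Calder\'on--Zygmund/John--Nirenberg estimate \eqref{eq:BMOestimate} applies to it with a very good (large) constant. Quantitatively, fix $\beta>0$ and pick a radius $r=\rho_f^{-1}(C/\beta)$ where $C$ is the constant appearing in \eqref{eq:BMOestimate}. Mollifying $f$ at scale $r$ produces $f=f_r+(f-f_r)$ with $\norm{f-f_r}_{L^\infty}\le \rho_f(r)\le C/\beta$ and $\norm{f_r}_{L^\infty}\le\norm{f}_{L^\infty}$, $\norm{\nabla f_r}_{L^\infty}\lesssim \norm{f}_{L^\infty}/r$ (or one can instead bound $\norm{\nabla(K\ast f_r)}_{L^\infty}$ directly, since $\nabla(K\ast f_r)=K\ast\nabla f_r$ and $\norm{K}_{L^1}<\infty$).

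\textbf{Key steps.} First I would write $\nabla(K\ast f)=\nabla(K\ast f_r)+\nabla(K\ast(f-f_r))$ and use the elementary inequality $\exp\{\beta(a+b)\}\le \tfrac12\exp\{2\beta a\}+\tfrac12\exp\{2\beta b\}$ to split the integral in \eqref{z20} into two pieces. Second, for the smooth term, $\norm{\nabla(K\ast f_r)}_{L^\infty}=\norm{K\ast\nabla f_r}_{L^\infty}\le \norm{K}_{L^1}\norm{\nabla f_r}_{L^\infty}\lesssim \norm{K}_{L^1}\norm{f}_{L^\infty}/r$, so $\int_{\T^2}\exp\{2\beta|\nabla(K\ast f_r)|\}\di x\le \exp\{c\beta\norm{K}_{L^1}\norm{f}_{L^\infty}/\rho_f^{-1}(C/\beta)\}$, which is finite and of exactly the claimed dependence. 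Third, for the small term, apply \eqref{eq:BMOestimate} to $g:=f-f_r\in L^\infty(\T^2)$: since $\norm{g}_{L^\infty}\le C/\beta$, i.e. $2\beta\le 2C/\norm{g}_{L^\infty}$, we get (up to renaming the constant $C$, or by choosing $r=\rho_f^{-1}(C/(2\beta))$ at the outset)
\[
\int_{\T^2}\exp\set{2\beta|\nabla(K\ast g)|}\di x\le \int_{\T^2}\exp\set{\frac{|\nabla(K\ast g)|}{C\norm{g}_{L^\infty}}}\di x\le C.
\]
Summing the two bounds yields \eqref{z20}.

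\textbf{Main obstacle.} The only real subtlety is bookkeeping the constants so that the chain ``$\norm{f-f_r}_{L^\infty}\le\rho_f(r)$, then match $r$ to $\beta$ via $\rho_f^{-1}$'' lines up with the fixed Calder\'on--Zygmund constant in \eqref{eq:BMOestimate}; one must choose the mollification scale as $r=\rho_f^{-1}(C/(2\beta))$ (or simply $\rho_f^{-1}(\min\{1,C/(2\beta)\})$ to stay in the torus) and allow the final constant to absorb universal factors. One should also remark that if $\rho_f^{-1}(C/\beta)$ is not well-defined as a strict inverse (because $\rho_f$ need only be nondecreasing and right-continuous with $\rho_f(0^+)=0$), one reads it as $\rho_f^{-1}(s):=\sup\{r>0:\rho_f(r)\le s\}$, which is positive for every $s>0$ by continuity of $f$ on the compact torus; this is exactly the mechanism by which the estimate degenerates as $\beta\to\infty$, matching the footnote. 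Everything else — the mollification bounds, Young's inequality $\norm{K\ast\nabla f_r}_{L^\infty}\le\norm{K}_{L^1}\norm{\nabla f_r}_{L^\infty}$, and the convexity splitting of the exponential — is routine.
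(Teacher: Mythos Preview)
Your proposal is correct and follows essentially the same approach as the paper: decompose $f$ into a smooth approximation plus a uniformly small remainder (the paper invokes an abstract $\bar f\in C^\infty$ with $\|f-\bar f\|_{L^\infty}\le C/\beta$ and $\|\nabla\bar f\|_{L^\infty}\le 2\|f\|_{L^\infty}/\rho_f^{-1}(C/\beta)$, which is precisely what your mollification produces), bound the smooth piece via $\|K\ast\nabla\bar f\|_{L^\infty}\le\|K\|_{L^1}\|\nabla\bar f\|_{L^\infty}$, and apply \eqref{eq:BMOestimate} to the small piece. The only cosmetic difference is that the paper uses the multiplicative splitting $e^{\beta(a+b)}=e^{\beta a}e^{\beta b}$ and pulls out $\sup e^{\beta a}$, whereas you use the convex splitting $e^{\beta(a+b)}\le\tfrac12 e^{2\beta a}+\tfrac12 e^{2\beta b}$; both work, and your remark about adjusting the scale to $\rho_f^{-1}(C/(2\beta))$ correctly handles the factor of $2$ this introduces.
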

\begin{proof}
	Let us fix $\beta>1$ and $f\in C(\T^2)$. There exists $\bar f\in C^{\infty}(\T^2)$ such that $\norm{f-\bar f}_{L^{\infty}}\le \frac{C}{\beta}$ and $\norm{\nabla \bar{f}}_{L^{\infty}}\le \frac{2\norm{f}_{\infty}}{\rho_f^{-1}(C/\beta)}$ where $C$ is as in \eqref{eq:BMOestimate}. Observe that, since $K\in L^1(\T^2;\setR^2)$, we have 
	\begin{equation}
		\norm{\nabla(K\ast \bar f)}_{L^{\infty}}=\norm{K\ast \nabla \bar f}_{L^{\infty}}\le\frac{2\norm{f}_{L^\infty}\norm{K}_{L^1}}{\rho_f^{-1}(C/\beta)}.
	\end{equation}
	Therefore we can estimate
	\begin{align*}
			\int_{\T^2} \exp\set{\beta |\nabla (K\ast f)|}\di x \le &
			\int_{\T^2} \exp\set{\beta |\nabla (K\ast \bar f)|+\beta|\nabla (K\ast (f-\bar f))|}\di x\\
			\le & \exp\left\{\frac{2\beta\norm{f}_{L^\infty}\norm{K}_{L^1}}{\rho_f^{-1}(C/\beta)}\right\} 	\int_{\T^2} \exp\set{\beta|\nabla (K\ast(f-\bar f))|}\di x\\
			\le  & \exp\left\{\frac{2\beta\norm{f}_{L^\infty}\norm{K}_{L^1}}{\rho_f^{-1}(C/\beta)}\right\} \int_{\T^2} \exp\left\lbrace \frac{|\nabla (K\ast (f-\bar f))|}{C\norm{f-\bar f}_{L^{\infty}}} \right\rbrace \di x\\
			\le & C\exp\left\{\frac{2\beta\norm{f}_{L^\infty}\norm{K}_{L^1}}{\rho_f^{-1}(C/\beta)}\right\}<\infty,
	\end{align*}
	where in the last line we used \eqref{eq:BMOestimate}.
\end{proof}

\begin{proof}[Proof of \autoref{th:main2}]
	Any weak solution $\omega_t$ of \eqref{EU} is also a distributional solution of \eqref{CE} with drift $b_t:=K\ast \omega_t$. Observe that $\div b_t=0$ and it holds
	\begin{equation*}
    \sup_{t>0}	\int_{\T^2} \exp\left\lbrace \frac{|\nabla b_t |}{C\norm{\omega_0}_{L^{\infty}}} \right\rbrace \di x \le C,
	\end{equation*}
	thanks to \eqref{eq:BMOestimate} and the identity $\norm{\omega_t}_{L^{\infty}}=\norm{\omega_0}_{L^{\infty}}$. Applying \autoref{th: Main 1}(i) the first conclusion follows.
	
	Let us now address (ii). First of all observe that if $\omega_0\in C(\T^2)$ then $\omega_t\in C(\T^2)$ for any $t\ge 0$. It follows from the identity $\omega_t(x)=\omega_0((X_t)^{-1}(x))$ where $X$ is the flow associated to $b_t$, that is continuous together with its inverse thanks to \eqref{eq: Holder flow}. Therefore \autoref{lemma:VMOestimate} infers that, for any $\beta>0$ and $t\ge 0$
	\begin{equation}\label{z21}
	\int_{\T^2} \exp\set{\beta |\nabla b_t(x)|}\di x<\infty.
	\end{equation}
	Exploiting \autoref{lemma:VMOestimate} and the fact that the modulus of continuity of $\omega_t$ fulfills $\sup_{0<t<T}\rho_{\omega_t}\le \rho$ for some nondecreasing $\rho:(0,\infty)\to (0,\infty)$ satisfying $\lim_{r\to 0} \rho(r)=0$ we can strengthen \eqref{z21} as
	\begin{equation*}
	\sup_{0<t<T}\int_{\T^2} \exp\set{\beta |\nabla b_t(x)|}\di x<\infty
	\qquad
	\text{for any}\ \beta>0.
	\end{equation*}
	We are in position to apply \autoref{cor: main1} and conclude the proof of (ii). We eventually prove (iii). Since $W^{\alpha,p}(\T^2)\subset C(\T^2)$ when $p>2/\alpha$ we use (ii) to deduce that $\omega_t\in W^{\alpha',p}(\T^2)$ for any $0<\alpha<\alpha'$ and $t\ge 0$. This infers that $\nabla b_t \in W^{\alpha',p}(\T^2;\setR^{2\times 2})$ for any $0<\alpha'<\alpha$. Sobolev's embedding theorem implies that $b_t$ is a Lipschitz vector field with respect to the spatial variable. It is also clear that this estimate is locally uniform in time, therefore the standard Cauchy-Lipschitz theory applies and we conclude that $X_t$ is actually biLipschitz, thus $\omega_t=\omega_0(X_t^{-1})\in W^{\alpha,p}(\T^2)$.
\end{proof}

\appendix
\section{Appendix}
In this appendix we collect two technical results concerning functions whose gradient is exponentially integrable. 
The first one is a consequence of \cite[Theorem 8.40]{AdamsFournier75}, we add its proof for the reader convenience.
\begin{lemma}\label{lemma: log-lipschitz regularity}
	Let $f\in W^{1,1}(\T^d)$ satisfy
	\begin{equation*}
	\int_{\T^d}\exp\set{\beta |\nabla f(x)|} \di x=:K<\infty,
	\qquad
	\text{for some}\ \beta>0.
	\end{equation*}
	Then $f$ admits a continuous representative satisfying
	\begin{equation}\label{eq: log-lipschitz estimate}
	|f(x)-f(y)|\le \frac{C}{\beta}\dist(x,y)\log\left( \frac{CK}{\dist(x,y)^d} \right)
	\qquad
	\forall x,y\in \T^d,
	\end{equation}
	where  $C=C_d>2$.
\end{lemma}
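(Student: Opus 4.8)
The plan is to derive the log-Lipschitz estimate \eqref{eq: log-lipschitz estimate} by combining a standard pointwise representation of $f(x)-f(y)$ via a line integral of $\nabla f$ along segments (averaged over an intermediate ball, to get a Riesz-potential bound) with the sharp control that exponential integrability gives on the resulting integral. First I would recall the classical estimate, valid for $f\in W^{1,1}$ (and extendable to all $x,y$ by the continuous representative), of the form
\begin{equation*}
	|f(x)-f(y)|\le C_d\int_{B_{2r}(x)}\frac{|\nabla f(z)|}{|x-z|^{d-1}}\di z + C_d\int_{B_{2r}(y)}\frac{|\nabla f(z)|}{|y-z|^{d-1}}\di z,
\end{equation*}
where $r:=\dist(x,y)$; this is the usual bound obtained by writing $f(x)-f(y)$ as an average over an intermediate point $w$ of $[f(x)-f(w)]+[f(w)-f(y)]$ and estimating each difference by a radial integral of $\nabla f$. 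On the torus one works with the periodic representative and restricts to scales $r\le 1/4$, say, so that geodesics are honest Euclidean segments; for $r$ bounded below the estimate \eqref{eq: log-lipschitz estimate} is trivial by adjusting $C_d$.

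The key step is then to bound a quantity of the type $I:=\int_{B_\rho(x)}\frac{|\nabla f(z)|}{|x-z|^{d-1}}\di z$ for $\rho\simeq r$ small, using only $\int_{\T^d}\exp\{\beta|\nabla f|\}\le K$. For this I would split $|\nabla f(z)|\le \frac1\beta\exp\{\beta|\nabla f(z)|\}$ on the set where $\beta|\nabla f(z)|\ge 1$ and bound it by $\frac1\beta$ (times a log, really) on the complement — or, more cleanly, use the elementary inequality $ab\le a\log a - a + e^{b}$ (Young's inequality for the pair $(t\log t - t + 1,\ e^t-1)$) with $a$ a suitable density and $b=\beta|\nabla f(z)|$. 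Concretely, writing $\di\mu_x(z):=|x-z|^{-(d-1)}\mathbf 1_{B_\rho(x)}(z)\di z$, which has total mass $\mu_x(\T^d)=c_d\rho$, one applies Jensen-type / entropy inequalities to get
\begin{equation*}
	\beta I = \int \beta|\nabla f|\,\di\mu_x \le c_d\rho\,\log\!\Big(\frac{1}{c_d\rho}\int e^{\beta|\nabla f|}\di z\Big)\le c_d\rho\,\log\!\Big(\frac{C K}{\rho}\Big),
\end{equation*}
which after adjusting constants and replacing $\rho$ by $r^d$ or $r$ as dimensions dictate gives exactly the right-hand side of \eqref{eq: log-lipschitz estimate} once summed over the $x$-ball and the $y$-ball. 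The precise bookkeeping — whether the $\log$ carries $r$ or $r^d$, and the exact constant $C_d$ — has to be tracked, but the structure $\rho\log(CK/\rho)$ is forced by the homogeneities.

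The main obstacle, as I see it, is not the core entropy inequality but getting the pointwise representation and the passage to the continuous representative clean on the torus, and making sure the averaging over the intermediate ball is done so that the Riesz kernel $|x-z|^{-(d-1)}$ genuinely appears with the correct (integrable) exponent — this is where one must be careful, since a naive one-dimensional line integral along $[x,y]$ does not give an $L^1$ bound in terms of $\|\nabla f\|_{L^1}$. Invoking \cite[Theorem 8.40]{AdamsFournier75} handles exactly this Morrey–Sobolev-type embedding for the Orlicz space $\exp L$, so in practice I would quote that theorem to obtain the modulus-of-continuity bound in the form $|f(x)-f(y)|\lesssim \dist(x,y)\,\big(\log(1/\dist(x,y))\big)$ with the constant controlled by the $\exp L$ norm, and then just unwind the definition of that norm in terms of $\beta$ and $K$ to land on \eqref{eq: log-lipschitz estimate} with $C=C_d$. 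The only remaining routine point is checking that $\|\nabla f\|_{\exp L}\lesssim_d \beta^{-1}\log(CK)$ (or the relevant variant), which is immediate from $\int e^{\beta|\nabla f|}\le K$ and the definition of the Luxemburg norm.
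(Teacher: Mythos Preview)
Your route via the Riesz-potential bound $|f(x)-f(y)|\lesssim I_1(|\nabla f|\mathbf 1_{B_{2r}})(x)+(\text{same at }y)$ is genuinely different from the paper's argument, which instead applies the $L^p$ Poincar\'e inequality on balls for every integer $p\ge 1$, divides by $p!$ and sums over $p$ to turn $\dashint|\nabla f|^p/p!$ into $\dashint\exp\{\beta|\nabla f|\}$, deduces $|f_{x,2r}-f_{x,r}|\le \frac{r}{\beta}\log(CK/r^d)$, and then telescopes dyadically from $f_{x,r}$ to $f(x)$. Your approach is viable and in some ways more direct, but the key displayed step is not correct as written. Jensen's inequality for the probability measure $\mu_x/(c_d\rho)$ yields
\[
\frac{\beta I}{c_d\rho}\;\le\;\log\Big(\frac{1}{c_d\rho}\int e^{\beta|\nabla f|}\,\di\mu_x\Big),
\]
with $\di\mu_x$, \emph{not} $\di z$, on the right; the singular weight $|x-z|^{-(d-1)}$ cannot simply be dropped, and $\int_{B_\rho}e^{\beta|\nabla f|}|x-z|^{-(d-1)}\di z$ is not controlled by $K$. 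The unscaled Young inequality $ab\le a\log a-a+e^b$ with $a=|x-z|^{-(d-1)}$ fares no better: integrating in $z$ gives $\beta I\le C_d\,\rho\log(1/\rho)+K$, and the bare additive $K$ (carrying no factor of $\rho$) destroys the estimate as $\rho\to 0$.

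The repair is standard but must be made explicit. Either (a) split the Riesz potential into dyadic shells $B_{2^{-k}\rho}\setminus B_{2^{-k-1}\rho}$, on each of which the kernel is essentially constant so that honest Jensen on the ball gives $\dashint_{B_{2^{-k}\rho}}|\nabla f|\le\beta^{-1}\log\big(K/(\omega_d(2^{-k}\rho)^d)\big)$, and then sum the resulting geometric series in $k$; or (b) use the \emph{scaled} Young inequality $ab\le a\log(a/\lambda)-a+\lambda e^b$ and optimize with $\lambda\sim c_d\rho/K$, so that the $e^b$-contribution becomes $\lambda K\sim\rho$ rather than $K$. Either way you recover $\beta I\lesssim_d \rho\log(CK/\rho^d)$ and hence \eqref{eq: log-lipschitz estimate}. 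Your fallback of quoting \cite[Theorem~8.40]{AdamsFournier75} is of course legitimate and is exactly what the paper's own remark before the lemma points to.
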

\begin{proof}
	Thanks to the Poincar\'e inequality there exists a constant $\bar C>0$ such that
	\begin{equation}\label{z10}
	\dashint_{B_{2r}(x)}\abs{f(y)-\dashint_{B_r(x)}f }\di y \le \bar C r \dashint_{B_{2r}(x)} |\nabla f(y)|\di y \le \bar Cr \left( \dashint_{B_{2r}(x)} |\nabla f(y)|^p\di y\right)^{1/p},
	\end{equation}
	for any $x\in \T^d$, for any $r\in (0,2)$ and $p\ge 1$. Using the notation $f_{x,r}:=\dashint_{B_r(x)} f$ we deduce from \eqref{z10}
	\begin{equation*}
	\frac{\left(  \frac{\beta}{r} \dashint_{B_{2r}(x)}|f-f_{x,r}|\di y\right)^p}{p!}
	\le \frac{\bar C}{r^d\omega_d} \int_{\T^d} \frac{(\beta|\nabla f(y)|)^p}{p!} \di y,
	\qquad
	\forall p\in \setN,\ p\ge 1.
	\end{equation*}
	Taking the sum for $p\ge 1$ we conclude
	\begin{equation*}
	\exp\left(\frac{\beta}{r}\dashint_{B_{2r}(x)}|f-f_{x,r}|\di y\right)-1\leq  \frac{\bar C}{r^d\omega_d} (K-1).
	\end{equation*}
	This gives
	\begin{equation}\label{z12}
	|f_{x,2r}-f_{x,r}|\le \dashint_{B_{2r}(x)}|f-f_{x,r}|\di y\le \frac{r}{\beta}\log\left(\frac{K\bar C }{\omega_d r^d}\right),
	\end{equation}
	up to increase $\bar C$. Thanks to Morrey's inequality (see \cite{AdamsFournier75}) we know that $f\in C(\T^d)$ and thus $f(x)=\lim_{r\to 0}\dashint_{B_r(x)} f\di y$. Using this, \eqref{z12} and a standard iteration procedure we get
	\begin{equation*}
		|f_{x,r}-f(x)|\le \sum_{k=0}^{\infty}|f_{x,2^{-k}r}-f_{x,2^{-k-1}r}|
		              \le \sum_{k=0}^{\infty} \frac{r2^{-k}}{\beta}\log\left(\frac{K\bar C }{\omega_d (2^{-k}r)^d}\right)
		              \le \frac{C'r}{\beta}\log\left(\frac{K\bar C }{\omega_d r^d}\right),
	\end{equation*}
	for some $C'>0$ depending only on $d$.
	Observe that when $r=\dist(x,y)$ we have $|f_{x,r}-f_{y,r}|\le 2^d\dashint_{B_{2r}(x)}|f-f_{x,r}|\di y$, therefore
	\begin{align*}
		|f(x)-f(y)|\le & |f(x)-f_{x,r}|+2^d\dashint_{B_{2r}(x)}|f-f_{x,r}|\di y+|f_{y,r}-f(y)|\\
		           \le & \frac{(2C'+2^d)r}{\beta}\log\left(\frac{K\bar C }{\omega_d r^d}\right)
		           \le \frac{C}{\beta}\dist(x,y)\log\left( \frac{CK}{\dist(x,y)^d} \right),
	\end{align*}
	where $C=C_d$.
\end{proof}

\begin{lemma}\label{lemma:exponential lusin}
	Let $f\in W^{1,1}(\T^d)$ satisfy
	\begin{equation*}
	\int_{\T^d}\exp\set{\beta |\nabla f(x)|} \di x<\infty,
	\qquad
	\text{for some}\ \beta>0.
	\end{equation*}
	Then for any $0<\beta'\le \beta$ we have
	\begin{equation*}
		\frac{\beta'}{C_1} \frac{|f(x)-f(y)|}{\dist(x,y)}
		\le 1+\log\left(c_d  M\left( \exp\left\{\beta'|\nabla f|\right\}\right)(x)\right)
		\qquad
		\forall x,y\in \setR^d,
	\end{equation*}
	where $c_d\ge 1$ and $C_1>0$ depend only on $d$.
\end{lemma}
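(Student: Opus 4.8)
The plan is to mimic the classical ``dyadic averaging'' derivation of the pointwise bound $|f(x)-f(y)|\lesssim_d\dist(x,y)\,(M|\nabla f|(x)+M|\nabla f|(y))$, but to funnel every average through Jensen's inequality and to force every ball that appears to be centred at the single point $x$. Two tools will be used repeatedly: the Poincar\'e inequality on the torus, $\dashint_{B_\rho(z)}|f-f_{z,\rho}|\,\di y\le c_d\,\rho\,\dashint_{B_\rho(z)}|\nabla f|\,\di y$ for all $z\in\T^d,\rho>0$ (here $f_{z,\rho}:=\dashint_{B_\rho(z)}f$), and Jensen's inequality in the form $\dashint_{E}|\nabla f|\,\di y\le \tfrac{1}{\beta'}\log\!\bigl(\dashint_E\exp\{\beta'|\nabla f|\}\,\di y\bigr)$ for any ball $E$. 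Fix $x,y\in\T^d$, put $r:=\dist(x,y)$ and $A:=M(\exp\{\beta'|\nabla f|\})(x)$; since $\exp\{\beta'|\nabla f|\}\ge 1$ we have $A\ge 1$, and if $A=\infty$ the inequality is trivial, so assume $A<\infty$. By \autoref{lemma: log-lipschitz regularity} the function $f$ has a continuous representative, hence $f(z)=\lim_{\rho\to 0}f_{z,\rho}$ for every $z$, and we may split
\begin{equation*}
|f(x)-f(y)|\le |f(x)-f_{x,2r}|+|f_{x,2r}-f_{y,2r}|+|f_{y,2r}-f(y)|,
\end{equation*}
noting that $x,y\in B_{2r}(x)\subset B_{4r}(x)$.

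I would then estimate the three pieces. For the first, telescoping $f(x)-f_{x,2r}$ along the radii $2^{1-k}r$ ($k\ge 0$) centred at $x$, applying Poincar\'e to consecutive dyadic averages and Jensen together with $\dashint_{B_{2^{1-k}r}(x)}\exp\{\beta'|\nabla f|\}\le A$, then summing the resulting geometric series, yields $|f(x)-f_{x,2r}|\le \tfrac{c_d r}{\beta'}\log A$. For the middle term one compares both $f_{x,2r}$ and $f_{y,2r}$ to $f_{x,4r}$: as $B_{2r}(x),B_{2r}(y)\subset B_{4r}(x)$ with bounded volume ratios, Poincar\'e on $B_{4r}(x)$ and Jensen give $|f_{x,2r}-f_{y,2r}|\le \tfrac{c_d r}{\beta'}\log A$. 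The third piece is the delicate one. Telescoping $f_{y,2r}-f(y)$ along the radii $2^{1-k}r$ centred at $y$, each ball $B_{2^{1-k}r}(y)$ still lies inside $B_{4r}(x)$, so
\begin{equation*}
\dashint_{B_{2^{1-k}r}(y)}\exp\{\beta'|\nabla f|\}\,\di z\le \frac{|B_{4r}(x)|}{|B_{2^{1-k}r}(y)|}\,A\le c_d\,2^{kd}A,
\end{equation*}
and Jensen turns this into $\dashint_{B_{2^{1-k}r}(y)}|\nabla f|\le \tfrac{1}{\beta'}\bigl(c_d+kd\log 2+\log A\bigr)$; multiplying by the Poincar\'e weight $c_d\,2^{1-k}r$ and summing over $k$ (both $\sum 2^{-k}$ and $\sum k2^{-k}$ converge) gives $|f_{y,2r}-f(y)|\le \tfrac{c_d r}{\beta'}(1+\log A)$.

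Adding the three bounds and using $\log A\ge 0$ to absorb the additive dimensional constants produces $|f(x)-f(y)|\le \tfrac{C_1 r}{\beta'}(1+\log A)\le \tfrac{C_1 r}{\beta'}\bigl(1+\log(c_d A)\bigr)$ for some $c_d\ge 1$ and $C_1$ depending only on $d$; dividing by $r=\dist(x,y)$ and rearranging gives exactly the asserted inequality. The step I expect to be the real obstacle is the third estimate: keeping the whole right-hand side expressed through the maximal function \emph{at the one point $x$} forces the values of $|\nabla f|$ near $y$ to be seen only after ``dilution'' through the larger concentric balls $B_{4r}(x)$, which is what generates the volume factors $2^{kd}$; these must then be balanced against the shrinking Poincar\'e weights $2^{-k}r$, a balance that succeeds precisely because Jensen's logarithm converts the ratio $2^{kd}$ into the merely \emph{linear} term $kd\log 2$. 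A minor but genuine point to be checked is that the Poincar\'e inequality on $\T^d$ holds with a purely dimensional constant for balls of every radius, which is what legitimises all of the averaging manipulations above.
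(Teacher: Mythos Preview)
Your argument is correct and takes a genuinely different route from the paper's. The paper does not telescope at all: it invokes Morrey's inequality in the form
\[
\Bigl(\tfrac{\beta'|f(x)-f(y)|}{C\,r}\Bigr)^{p}\le \dashint_{B_r(x)}(\beta'|\nabla f|)^{p}\,\di z,\qquad r=2\dist(x,y),\ p>d\ \text{integer},
\]
with a constant $C$ independent of $p$, divides by $p!$, sums over $p\ge d+1$ to obtain $e^{A}-\sum_{j\le d}A^{j}/j!\le M(\exp\{\beta'|\nabla f|\})(x)$ with $A=\tfrac{\beta'}{2C}|f(x)-f(y)|/\dist(x,y)$, and then inverts. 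So the paper's proof is a ``moment method'': it controls all $L^p$ oscillations simultaneously and reassembles them into an exponential, whereas you work scale-by-scale with the $(1,1)$-Poincar\'e inequality and apply Jensen at each step. Your approach is more elementary in that it avoids the uniform-in-$p$ Morrey constant and uses only the basic $L^1$ Poincar\'e inequality; the price is the extra bookkeeping for the $y$-centred balls, which you handle correctly by the inclusion $B_{2^{1-k}r}(y)\subset B_{4r}(x)$ and the observation that the resulting volume factor $2^{kd}$ becomes, after Jensen, the linear term $kd\log 2$ that is killed by the Poincar\'e weight $2^{-k}r$. The paper's route is shorter but leans on a sharper analytic input; yours is closer in spirit to the proof of \autoref{lemma: log-lipschitz regularity} in the same appendix and would transfer more readily to settings (e.g.\ doubling metric spaces supporting a Poincar\'e inequality) where Morrey's embedding is not directly available.
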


\begin{proof}
  Let us fix $x,y\in \setR^d$. Using Morrey's inequality (see \cite{AdamsFournier75}) we have
  \begin{equation}\label{zzz1}
  	\frac{1}{p!}\left(  \frac{\beta'|f(x)-f(y)|}{C r}  \right)^p
  	\le \dashint_{B_r(x)} \frac{(\beta'|\nabla f|)^p}{p!}\di z
  	\qquad
  	\forall p>d\ \text{integer},
  \end{equation}
  where $C>0$ depends only on $d$ and $r:=2\dist(x,y)$. In order to make notation short let us set
  \begin{equation*}
  	A:=	\frac{\beta'}{2C} \frac{|f(x)-f(y)|}{\dist(x,y)}.
  \end{equation*}
  Taking the sum in \eqref{zzz1} for $p$ between $d+1$ and $\infty$ we end up with
  \begin{equation*}
  	e^A-1-A-\frac{A^2}{2!}-...-\frac{A^d}{d!}
  	\le \dashint_{B_r(x)} \exp \left\{\beta'|\nabla f|\right\} \di z
  	\le M\left( \exp \left\{\beta'|\nabla f|\right\}\right)(x).
  \end{equation*}
  Now observe that, when $A\ge 1$, we have 
  \begin{equation*}
  	e^{A/2} \le c_d( e^A-1-A-\frac{A^2}{2!}-...-\frac{A^d}{d!})\le c_dM\left( \exp \left\{\beta'|\nabla f|\right\}\right)(x),
  \end{equation*}
  for some constant $c_d\ge 1$, thus we deduce
  \begin{equation*}
  	\frac{A}{2}\le \log\left( c_dM\left( \exp\left\{\beta'|\nabla f|\right\}-1\right)(x)   \right)
  	\qquad
  	\text{when}\ A>1,
  \end{equation*}
  that trivially gives
  \begin{equation*}
  	\frac{A}{2} \le \frac{1}{2}+\log \left(c_dM \exp\left\{\beta'|\nabla f|\right\}(x) \right),
  \end{equation*}
  without any restriction on $A$. Recalling the definition of $A$ we conclude.
\end{proof}

\end{document}